\RequirePackage{fix-cm}
\RequirePackage{fixltx2e}
\documentclass[oneside,english]{amsart}

\usepackage[latin9]{inputenc}
\synctex=-1
\usepackage{babel}
\usepackage{url}
\usepackage{amsthm}
\usepackage{amssymb}
\usepackage{esint}
\usepackage[unicode=true,pdfusetitle,
 bookmarks=true,bookmarksnumbered=false,bookmarksopen=false,
 breaklinks=false,pdfborder={0 0 1},backref=false,colorlinks=false]
 {hyperref}
 \makeatletter

\renewcommand{\theequation}{\theequation. \arabic{equation}}
\numberwithin{equation}{section}
\newtheorem{thm}{Theorem}[section]

\newtheorem{prop}{Proposition}[section]

\usepackage[numbers,sort&compress]{natbib}

\setlength{\textwidth}{6.0in}
\setlength{\textheight}{8.0in}

\def\squarebox#1{\hbox to #1{\hfill\vbox to #1{\vfill}}}
\def\qed{\hspace*{\fill}
         \vbox{\hrule\hbox{\vrule\squarebox{.667em}\vrule}\hrule}\smallskip}
\begin{document}\large
\title[On An Appell series over finite fields]
{\Large An Appell series over finite fields}
\author{\small  Bing He}
\address{\small
College of Science, Northwest A\&F University,
   Yangling 712100, Shaanxi, People's Republic of China}
\email{yuhe001@foxmail.com; yuhelingyun@foxmail.com}
\thanks{The first author is the corresponding author.}

\author{\small Long Li}
\address{\small Department of Mathematics, East China Normal University, 500 Dongchuan Road, Shanghai
200241, People's Republic of China}
\email{lilong6820@126.com}

\author{\small  Ruiming Zhang}
\address{\small
College of Science, Northwest A\&F University,
   Yangling 712100, Shaanxi, People's Republic of China}
\email{ruimingzhang@outlook.com}



\keywords{\noindent  Appell series over finite fields, reduction formula, transformation formula, generating function.}
\subjclass[2010]{Primary 33C65, 11T24; Secondary 11L05, 33C20}
\begin{abstract}
\small In this paper we present a finite field analogue for  one of the Appell series. We shall derive
its transformations, reduction formulas as well as  generating functions.
\end{abstract}
\maketitle
\section{Introduction}

As in  \cite{BEW} and \cite {IR}  we let $\mathbb{F}_{q}$ denote the finite field of $q$ elements and $\widehat{\mathbb{F}^{*}_{q}}$ the group of multiplicative characters of $\mathbb{F}^{*}_{q}$ where $q$ is a power of a prime. We also let  $\chi$ be a character of $\mathbb{F}^{*}_{q}$  lifted to  $\mathbb{F}_{q}$ by setting $\chi(0)=0$,   $\overline{\chi}$ and $\varepsilon$ denote  the inverse of $\chi$ and the trivial character respectively. In the work we shall give a finite field analogue for the second Appell series $F_{2}$, derive its transformations, reduction formulas and generating functions.

The generalized hypergeometric function is defined by \cite{B}
$$ {}_{n+1}F_n \left(\begin{matrix}
a_0, a_1, \ldots , a_{n} \\
b_1, \ldots , b_n \end{matrix}
\bigg| x \right):=\sum_{k=0}^{\infty}\frac{(a_{0})_{k}(a_{1})_{k}\cdots(a_{n})_{k}}{k!(b_{1})_{k}\cdots(b_{n})_{k}}x^{k}$$
where $(z)_{k}$ is the Pochhammer symbol given by
\begin{equation*}
(z)_{0}=1,~(z)_{k}=z(z+1)\cdots(z+k-1)\text{ for } k\geq 1.
\end{equation*}

Greene in \cite{Gr}  developed a theory of hypergeometric functions over finite fields and proved many  transformation and summation identities for his  hypergeometric functions. There Greene introduced the notation
\begin{equation*}
{}_{2}F_1 \left(\begin{matrix}
A, B \\
C \end{matrix}
\bigg| x \right)^{G}=\varepsilon(x)\frac{BC(-1)}{q}\sum_{y}B(y)\overline{B}C(1-y)\overline{A}(1-xy)
\end{equation*}
for $A,B,C\in \widehat{\mathbb{F}_{q}}$ and $x\in \mathbb{F}_{q}. $  This definition  is
clearly a finite field analogue for the integral \cite{B}:
\begin{equation*}
{}_{2}F_1 \left(\begin{matrix}
a, b \\
c \end{matrix}
\bigg| x \right)=\frac{\Gamma(c)}{\Gamma(b)\Gamma(c-b)}\int_{0}^{1}t^b(1-t)^{c-b}(1-tx)^{-a}\frac{dt}{t(1-t)}.
\end{equation*}
He also defined a finite field analogue for the binomial coefficient as
\begin{equation*}
  {A\choose B}^{G}=\frac{B(-1)}{q}J(A,\overline{B}),
\end{equation*}
where the Jacobi sum $J(\chi,\lambda)$ is given by $$J(\chi,\lambda)=\sum_{u}\chi(u)\lambda(1-u).$$ For more information about the finite field analogue for the generalized hypergeometric functions, please see \cite{FL, M, EG}.

In this paper, for the sake of simplicity, we use the notation
\begin{equation*}
  {A\choose B}=q{A\choose B}^{G}=B(-1)J(A,\overline{B}).
\end{equation*}
Furthermore, we define the finite field analogue for the classic Gauss hypergeometric series as
\begin{equation*}
{}_{2}F_1 \left(\begin{matrix}
A, B \\
C \end{matrix}
\bigg| x \right)=q\cdot{}_{2}F_1 \left(\begin{matrix}
A, B \\
C \end{matrix}
\bigg| x \right)^{G}=\varepsilon(x)BC(-1)\sum_{y}B(y)\overline{B}C(1-y)\overline{A}(1-xy).
\end{equation*}
For any $A,B,C\in \widehat{\mathbb{F}_{q}}$ and $x\in \mathbb{F}_{q}$, by \cite [Theorem 3.6]{Gr}, then
\begin{equation}\label{e1-1}
  {}_{2}F_1 \left(\begin{matrix}
A, B \\
C \end{matrix}
\bigg| x \right)=\frac{1}{q-1}\sum_{\chi}{A\chi\choose \chi}{B\chi\choose C\chi}\chi(x)
\end{equation}
Similarly, the finite field analogue of the generalized hypergeometric series for any $A_{0},A_{1},\cdots, A_{n},\\
B_{1},\cdots,B_{n}\in \widehat{\mathbb{F}_{q}}$ and $x\in \mathbb{F}_{q}$ is defined by
\begin{equation*}
  {}_{n+1}F_n \left(\begin{matrix}
A_{0}, A_{1},\cdots, A_{n} \\
 B_{1},\cdots, B_{n} \end{matrix}
\bigg| x \right)=\frac{1}{q-1}\sum_{\chi}{A_{0}\chi\choose \chi}{A_{1}\chi\choose B_{1}\chi}\cdots{A_{n}\chi\choose B_{n}\chi}\chi(x).
\end{equation*}
The finite field analogue for the binomial theorem can be stated in the form:

\begin{thm}\emph{(Binomial theorem, see \cite [(2.5)]{Gr})} For any character $A\in \widehat{\mathbb{F}_{q}}$ and $x\in \mathbb{F}_{q},$ we have
\begin{equation*}
  A(1+x)=\delta(x)+\frac{1}{q-1}\sum_{\chi}{A\choose \chi}\chi(x),
\end{equation*}
where the summation is over all multiplicative characters of $\mathbb{F}_{q}$ and $\delta(x)$ is a function on $\mathbb{F}_{q}$ given by
\begin{equation*}
\delta(x)=\left\{
            \begin{array}{ll}
              1 & \hbox{if $x=0$} \\
              0 & \hbox{if $x\neq 0$}
            \end{array}.
          \right.
\end{equation*}
\end{thm}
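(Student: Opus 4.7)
The plan is to prove the binomial theorem by direct computation: unfold the definition of $\binom{A}{\chi}=\chi(-1)J(A,\overline{\chi})$ on the right-hand side, swap the order of summation between $u$ and $\chi$, and then evaluate the inner character sum via orthogonality. I would split into the two cases $x=0$ and $x\neq 0$ because the argument $\chi(x)$ on the right and the argument $1+x$ on the left each have distinguished values to watch.

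For $x=0$: since every character is lifted by $\chi(0)=0$, all terms in the sum vanish, so the right-hand side reduces to $\delta(0)=1$, which matches $A(1)=1$.

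For $x\neq 0$ (so $\delta(x)=0$), I would expand
\begin{equation*}
\sum_{\chi}\binom{A}{\chi}\chi(x)=\sum_{\chi}\chi(-1)\chi(x)\sum_{u}A(u)\overline{\chi}(1-u)=\sum_{u}A(u)\sum_{\chi}\chi(-x)\overline{\chi}(1-u).
\end{equation*}
The boundary $u=0$ contributes nothing since $A(0)=0$, and $u=1$ contributes nothing since $\overline{\chi}(0)=0$. For $u\notin\{0,1\}$ the inner sum becomes $\sum_{\chi}\chi\bigl(-x(1-u)^{-1}\bigr)$, and by orthogonality of multiplicative characters this equals $q-1$ exactly when $-x(1-u)^{-1}=1$, i.e.\ $u=1+x$, and vanishes otherwise.

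Thus everything collapses to the single term $u=1+x$, giving $(q-1)A(1+x)$ after division by $q-1$. I would finish by noting the subcase $x=-1$, where $1+x=0$ forces both sides to be zero (on the right, $u=1+x=0$ gets killed by the $A(0)=0$ boundary term we already excluded). The only real technical point is verifying that the boundary values $u\in\{0,1\}$ and the exceptional $x=-1$ all cancel consistently; this is the step I expect to handle with most care, but it is routine rather than a genuine obstacle.
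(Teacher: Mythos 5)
Your proof is correct: the paper itself gives no proof of this statement, simply citing Greene's (2.5), and your direct computation --- unfolding $\binom{A}{\chi}=\chi(-1)J(A,\overline{\chi})$, interchanging the sums over $u$ and $\chi$, and applying orthogonality to isolate the term $u=1+x$ --- is exactly the standard derivation. Your handling of the boundary cases is also sound, since the lifting convention $\chi(0)=0$ (which applies to $A$ and $\varepsilon$ as well) makes the $u\in\{0,1\}$ terms and the exceptional case $x=-1$ all vanish consistently.
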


In our notations, one of Greene's theorem is  much simpler:
\begin{thm}\emph{(See \cite [Theorem 4.9]{Gr})}For any characters $A,B,C\in \widehat{\mathbb{F}_{q}},$  we have
\begin{equation}\label{e1-2}
{}_{2}F_1 \left(\begin{matrix}
A, B \\
 C \end{matrix}
\bigg| 1 \right)=A(-1){B\choose \overline{A}C}.
\end{equation}
\end{thm}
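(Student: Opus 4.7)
The plan is to unravel both sides using the definitions already given in the introduction and reduce the theorem to a single symmetry of the Jacobi sum. First, I would specialize the integral-style definition
\[
{}_2F_1\!\left(\begin{matrix}A,B\\C\end{matrix}\bigg|x\right)=\varepsilon(x)BC(-1)\sum_{y}B(y)\overline{B}C(1-y)\overline{A}(1-xy)
\]
at $x=1$. Since $\varepsilon(1)=1$ and the two factors $\overline{B}C(1-y)$ and $\overline{A}(1-y)$ combine into the single character value $\overline{A}\overline{B}C(1-y)$, the sum becomes exactly the defining sum of $J(B,\overline{A}\overline{B}C)$. Thus the left-hand side simplifies to $B(-1)C(-1)\,J(B,\overline{A}\overline{B}C)$.

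Next I would unwind the right-hand side using the paper's convention ${X\choose Y}=Y(-1)J(X,\overline{Y})$. This gives ${B\choose \overline{A}C}=\overline{A}(-1)C(-1)\,J(B,A\overline{C})$, and using $A(-1)\overline{A}(-1)=A(1)=1$, we get $A(-1){B\choose \overline{A}C}=C(-1)J(B,A\overline{C})$. After cancelling the common factor $C(-1)$, the theorem reduces to the purely combinatorial identity
\[
B(-1)\,J(B,\overline{A}\overline{B}C)=J(B,A\overline{C}).
\]

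The core step is establishing this Jacobi-sum identity. I would first record the two elementary symmetries: $J(\chi,\lambda)=J(\lambda,\chi)$ by the substitution $u\mapsto 1-u$, and $J(\chi,\lambda)=\lambda(-1)J(\overline{\chi\lambda},\lambda)$ by the substitution $u\mapsto 1/u$ together with $\lambda(v-1)=\lambda(-1)\lambda(1-v)$. Applying symmetry, $J(B,\overline{A}\overline{B}C)=J(\overline{A}\overline{B}C,B)$, and then applying the inversion identity with $\chi=\overline{A}\overline{B}C$ and $\lambda=B$ yields $J(\overline{A}\overline{B}C,B)=B(-1)J(A\overline{C},B)=B(-1)J(B,A\overline{C})$. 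Multiplying by $B(-1)$ and using $B(-1)^{2}=1$ gives the desired equality.

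The only subtle point, and the step I would check most carefully, is the degenerate situation in which one of the intermediate characters (such as $\overline{A}\overline{B}C\cdot B=A\overline{C}$ or $B$ itself) becomes trivial, since the substitution $u\mapsto 1/u$ has to be handled with the convention $\chi(0)=0$. A direct check using the standard values $J(\varepsilon,\varepsilon)=q-2$, $J(\varepsilon,\lambda)=-1$ for $\lambda\neq\varepsilon$, and $J(\chi,\overline{\chi})=-\chi(-1)$ for $\chi\neq\varepsilon$ confirms that the inversion identity $J(\chi,\lambda)=\lambda(-1)J(\overline{\chi\lambda},\lambda)$ remains valid in every case, so no separate case analysis for the final result is needed.
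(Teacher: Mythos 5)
Your proof is correct. Note that the paper itself gives no proof of this statement---it is quoted directly from Greene (his Theorem 4.9)---so there is no argument in the paper to compare against; your derivation is essentially the standard one and is the natural way to verify the identity from the definitions the paper does provide. Specializing the defining sum at $x=1$ correctly yields $BC(-1)J(B,\overline{A}\,\overline{B}C)$, the right-hand side correctly unwinds to $C(-1)J(B,A\overline{C})$, and the Jacobi-sum identity $J(\chi,\lambda)=\lambda(-1)J(\overline{\chi\lambda},\lambda)$ that you establish via $u\mapsto 1/u$ does hold in all degenerate cases under the convention $\chi(0)=0$ for every character (including $\varepsilon$), exactly as your case check confirms. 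One remark: the combinatorial identity you reduce to, $B(-1)J(B,\overline{A}\,\overline{B}C)=J(B,A\overline{C})$, is precisely the content of the paper's own \eqref{f2} applied to ${B\choose \overline{A}C}={B\choose AB\overline{C}}$ after unwinding both binomial coefficients, so you could have cited Proposition \ref{pp2-1} instead of reproving the substitution identities; but proving them directly makes your argument self-contained and costs little.
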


There are many interesting double hypergeometric functions. Among them Appell's four functions may be the most important ones \cite{A, B, CA, S}:
\begin{align*}
F_{1}(a;b,b';c;x,y)&=\sum_{m,n\geq 0}\frac{(a)_{m+n}(b)_{m}(b')_{n}}{m!n!(c)_{m+n}}x^my^n,~|x|<1,~|y|<1,\\
F_{2}(a;b,b';c,c';x,y)&=\sum_{m,n\geq 0}\frac{(a)_{m+n}(b)_{m}(b')_{n}}{m!n!(c)_{m}(c')_{n}}x^my^n,~|x|+|y|<1,\\
F_{3}(a,a';b,b';c;x,y)&=\sum_{m,n\geq 0}\frac{(a)_{m}(a')_{n}(b)_{m}(b')_{n}}{m!n!(c)_{m+n}}x^my^n,~|x|<1,~|y|<1,\\
F_{4}(a;b;c,c';x,y)&=\sum_{m,n\geq 0}\frac{(a)_{m+n}(b)_{m+n}}{m!n!(c)_{m}(c')_{n}}x^my^n,~|x|^{\frac{1}{2}}+|y|^{\frac{1}{2}}<1.
\end{align*}

Inspired by Greene's work, the second author \emph{et al} in \cite{LLM} gave a finite field analogue for the Appell series $F_{1}$ and
proved some transformations, reduction formulas and  generating functions for their  function. Their finite field analogue for the Appell series $F_{1}$  is
\begin{equation*}
F_{1}(A;B,B';C;x,y)=\varepsilon(xy)AC(-1)\sum_{u}A(u)\overline{A}C(1-u)\overline{B}(1-ux)\overline{B'}(1-uy).
\end{equation*}
$F_{1}$ was chosen owing to the fact that it has an  integral representation
 \cite [Chapter IX]{B}:
\begin{equation*}
F_{1}(a;b,b';c;x,y)=\frac{\Gamma(c)}{\Gamma(a)\Gamma(c-a)}\int_{0}^{1}u^{a-1}(1-u)^{c-a-1}(1-ux)^{-b}(1-uy)^{-b'}du
\end{equation*}
where $0< \Re(a)<\Re(c).$

Motivated by the ideas in \cite{Gr},  \cite{LLM}, and the integral representation,
 \cite [Chapter IX]{B}
\begin{align*}
	F_{2}(a;b,b';c,c';x,y)&=\frac{\Gamma(c)\Gamma(c')}{\Gamma(b)\Gamma(b')\Gamma(c-b)\Gamma(c'-b')}\\
	&\cdot \int_{0}^1 \int_{0}^1u^{b-1}v^{b'-1}(1-u)^{c-b-1}(1-v)^{c'-b'-1}(1-ux-vy)^{-a}dudv,
\end{align*}
in the current work we define a finite field analogue for  the Appell series  $F_{2}$ as
\begin{align*}
  F_{2}(A;&B,B';C,C';x,y)\\
&=\varepsilon(xy)BB'CC'(-1) \sum_{u,v}B(u)B'(v)\overline{B}C(1-u)\overline{B'}C'(1-v)\overline{A}(1-ux-vy),
\end{align*}
where $A,B,B',C,C'\in \widehat{\mathbb{F}_{q}},~x,y\in \mathbb{F}_{q}$ and  both summations are over all the elements of $\mathbb{F}_{q}.$ For the simplicity, the factor $\frac{\Gamma(c)\Gamma(c')}{\Gamma(b)\Gamma(b')\Gamma(c-b)\Gamma(c'-b')}$ is excluded in the definition, and the factor  $\varepsilon(xy)\cdot BB'CC'(-1)$ is here for a better expression in terms of binomial coefficients.

The following theorem gives an alternative representation for $F_{2}(A;B,B';C,C';x,y).$
\begin{thm}\label{t1}For any $A,B,B',C,C'\in \widehat{\mathbb{F}_{q}} $ and $x,y\in \mathbb{F}_{q},$ we have
\begin{align*}
  F_{2}(A;B,B';C,C';x,y)&=\frac{1}{(q-1)^2}\sum_{\chi, \lambda}{A\chi\choose \chi}{A\chi\lambda\choose \lambda}{B\chi\choose C\chi}{B'\lambda\choose C'\lambda}\chi(x)\lambda(y)\\
&~+\overline{A}(-x)\overline{C'}(y)\overline{B'}C'(1-y){\overline{A}B\choose B\overline{C}},
\end{align*}
where both summations are over all multiplicative characters of $\mathbb{F}_{q}.$
\end{thm}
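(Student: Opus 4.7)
\smallskip
\noindent\textbf{Proof plan.} When $xy=0$ the LHS vanishes through the factor $\varepsilon(xy)$, and every summand on the RHS also vanishes (the main sum carries $\chi(x)\lambda(y)$, and the extra term carries $\overline{A}(-x)\overline{C'}(y)$); so I may assume $xy\neq 0$.

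The structural device, inspired by the classical integral derivation, is the character factorization
\[
\overline{A}(1-ux-vy)\;=\;\overline{A}(1-vy)\,\overline{A}\!\left(1-\frac{ux}{1-vy}\right),
\]
which holds whenever $1-vy\neq 0$. I would split the defining double sum of $F_{2}$ into the range $v\neq 1/y$ and the single point $v=1/y$.

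For the main range $v\neq 1/y$, after substituting the factorization, the inner sum over $u$ is, up to the factor $BC(-1)$, the defining sum of ${}_{2}F_{1}(A,B;C\,|\,x/(1-vy))$. Expanding via \eqref{e1-1} introduces a character $\chi$ and the factor $\chi(x)\overline{\chi}(1-vy)$. After interchanging summations and merging $\overline{A}(1-vy)\overline{\chi}(1-vy)=\overline{A\chi}(1-vy)$, the remaining $v$-sum
\[
\sum_{v\neq 1/y} B'(v)\,\overline{B'}C'(1-v)\,\overline{A\chi}(1-vy)
\]
may be extended to all $v$, because the missing term carries the factor $\overline{A\chi}(0)=0$. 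The extended sum equals $B'C'(-1)\cdot {}_{2}F_{1}(A\chi,B';C'\,|\,y)$, and a second application of \eqref{e1-1} in a new variable $\lambda$ delivers the double sum claimed in the theorem. All accumulated $\mu(-1)$ factors---namely $BB'CC'(-1)\cdot BC(-1)\cdot B'C'(-1) = (BC)^{2}(-1)\cdot(B'C')^{2}(-1)$---collapse to $1$ via $\mu(-1)^{2}=1$.

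For the exceptional value $v=1/y$, direct substitution gives $\overline{A}(1-ux-vy)=\overline{A}(-ux)$, and $B'(1/y)\overline{B'}C'(1-1/y)$ simplifies to $\overline{C'}(y)\overline{B'}C'(-1)\overline{B'}C'(1-y)$; the remaining $u$-sum then collapses to the Jacobi sum $J(B\overline{A},\overline{B}C)$. Combining these with the prefactor $BB'CC'(-1)$ and using $\overline{C}(-1)=C(-1)$ to rewrite $BC(-1)J(B\overline{A},\overline{B}C)$ as ${\overline{A}B\choose B\overline{C}}$ yields exactly the advertised correction term $\overline{A}(-x)\overline{C'}(y)\overline{B'}C'(1-y){\overline{A}B\choose B\overline{C}}$. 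The main obstacle throughout is the meticulous bookkeeping of the $\mu(-1)$ factors at each step; the structural content is simply two clean applications of Greene's formula \eqref{e1-1}, once in each variable, made legal by treating the point $v=1/y$ separately.
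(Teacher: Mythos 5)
Your proof is correct, and it reaches the result by a route that shares the paper's skeleton but differs in its key mechanism. Both arguments isolate the exceptional point $v=y^{-1}$, where $\overline{A}(1-ux-vy)=\overline{A}(-ux)$ and the $u$-sum collapses to a Jacobi sum giving the correction term (your sign bookkeeping $BB'CC'(-1)\cdot\overline{B'}C'(-1)=BC(-1)$ and $BC(-1)J(B\overline{A},\overline{B}C)={\overline{A}B\choose B\overline{C}}$ checks out), and both ultimately rest on the factorization $\overline{A}(1-ux-vy)=\overline{A}(1-vy)\,\overline{A}(1-ux/(1-vy))$. The difference is in how that factorization is exploited. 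The paper first expands the trinomial character through its Proposition \ref{T}, obtaining a double character sum together with several $\delta$-correction terms; it must then verify separately that the $\delta(ux)$, $\delta(v)$ and $\lambda(-1)$ cross-terms vanish, and finally convert the $u$- and $v$-sums into binomial coefficients via Jacobi-sum identities \eqref{f2}--\eqref{f1}. You instead perform the $u$-sum first, recognize it (for $z=x/(1-vy)\neq 0$) as the defining sum of ${}_{2}F_1(A,B;C\mid z)$ up to $BC(-1)$, expand by Greene's formula \eqref{e1-1}, merge $\overline{A}\,\overline{\chi}(1-vy)=\overline{A\chi}(1-vy)$, and repeat in $v$; the $\delta$-bookkeeping never arises because $\chi(0)=0$ kills the would-be exceptional terms, and the extension of the $v$-sum to all of $\mathbb{F}_q$ is legitimate since the added term carries $\overline{A\chi}(0)=0$. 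Your cancellation $BB'CC'(-1)\cdot BC(-1)\cdot B'C'(-1)=1$ is also right. What your route buys is a shorter, cleaner derivation that uses \eqref{e1-1} as a black box twice; what the paper's route buys is a self-contained reduction to the binomial theorem and a reusable trinomial expansion (Proposition \ref{T}) that it presents as a statement of independent interest.
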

From the definition of $F_{2}(A;B,B';C,C';x,y),$  Theorem \ref{t1}, \eqref{e1-2} and Proposition \ref{pp2-1} (in Section 2),  we can easily deduce the following results.
\begin{prop}For any $A,B,B',C,C'\in \widehat{\mathbb{F}_{q}} $ and $x,y\in \mathbb{F}_{q},$ we have
\begin{align}
  F_{2}(A;B,B';C,C';x,y)&=F_{2}(A;B',B;C',C;y,x),\label{p11-1}\\
F_{2}(A;B,B';C,C';x,1)&=B'C'(-1){}_{3}F_2 \left(\begin{matrix}
A,B,A\overline{C'}\\
C,AB'\overline{C'} \end{matrix}
\bigg| x \right),\label{p11-2}\\
F_{2}(A;B,B';C,C';1,y)&=BC(-1){}_{3}F_2 \left(\begin{matrix}
A,B',A\overline{C}\\
C',AB\overline{C} \end{matrix}
\bigg| y \right). \notag
\end{align}
\end{prop}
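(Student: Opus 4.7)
The first identity is immediate from the defining double sum: my plan is to relabel the summation as $(u,v)\mapsto (v,u)$ and observe that the prefactor $\varepsilon(xy)BB'CC'(-1)$ is invariant under the simultaneous swap $(B,C,x)\leftrightarrow (B',C',y)$. The relabeled sum is then precisely $F_{2}(A;B',B;C',C;y,x)$, and no further work is required.

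For the second identity the main idea is to specialize Theorem \ref{t1} at $y=1$. The boundary term vanishes, since $\overline{B'}C'(1-y)|_{y=1}=\overline{B'}C'(0)=0$ (all characters are extended to $\mathbb{F}_{q}$ by zero at the origin). Since $\lambda(1)=1$, the inner sum over $\lambda$ in the remaining double sum collapses, via \eqref{e1-1}, to a Gauss ${}_{2}F_{1}$ evaluated at $1$; Greene's evaluation \eqref{e1-2} then rewrites it as $(q-1)A\chi(-1){B'\choose \overline{A}\overline{\chi}C'}$. After inserting this back, matching the outer sum against the target ${}_{3}F_{2}$ reduces the claim to the single character identity
\begin{equation*}
A(-1)\chi(-1){B'\choose \overline{A}\overline{\chi}C'}=B'C'(-1){A\overline{C'}\chi\choose AB'\overline{C'}\chi}.
\end{equation*}
I would verify this by expanding both binomial coefficients via ${X\choose Y}=Y(-1)J(X,\overline{Y})$, applying the Jacobi relation $J(\alpha,\beta)=\alpha(-1)J(\alpha,\overline{\alpha\beta})$ (derived from the substitution $u\mapsto (u-1)/u$) together with the symmetry $J(\alpha,\beta)=J(\beta,\alpha)$, and checking that both sides collapse to $C'(-1)J(B',A\chi\overline{C'})$. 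The hardest part will be keeping track of the various $\chi(-1)$ signs and handling the degenerate cases in which some product of characters becomes trivial; I expect Proposition \ref{pp2-1} from Section 2 to encapsulate precisely this bookkeeping.

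The third identity I would deduce by composition: the first identity rewrites $F_{2}(A;B,B';C,C';1,y)$ as $F_{2}(A;B',B;C',C;y,1)$, and then the second identity, applied with parameter substitution $(B,B',C,C',x)\to (B',B,C',C,y)$, immediately produces the claimed ${}_{3}F_{2}$ expression.
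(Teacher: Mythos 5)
Your proposal is correct and follows exactly the route the paper indicates (the paper only cites the ingredients — the definition for the symmetry, then Theorem \ref{t1}, the Gauss evaluation \eqref{e1-2}, and Proposition \ref{pp2-1} for the specializations — without writing out the details). The single-character identity you reduce to does hold: applying \eqref{f2} and then \eqref{f3} to ${A\overline{C'}\chi\choose AB'\overline{C'}\chi}$ gives $A\overline{C'}\chi\overline{B'}(-1){B'\choose \overline{A}C'\overline{\chi}}$, which matches your left-hand side after multiplying by $B'C'(-1)$, and since those binomial relations hold for all characters no degenerate cases need separate treatment.
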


Unlike $F_{1}$, the Appell series $F_{2}$ does not  have  a single integral representation, its simpliest integral representation that we know is a  double integral formula, our finite field analogue definition is based on that double integral formula, which naturally leads to more complicated than the definition for $F_{1}$ in \cite{LLM}. Consequently,  the transformations and reduction formulas and  the generating functions for the Appell series $F_{2}$ over finite fields are also  more complicated than those corresponding results in \cite{LLM}.

Here is the outline for the rest of this work. In Section 2 we will prove Theorem \ref{t1}, while in Section 3 several transformation and reduction formulas for $F_{2}(A;B,B';C,C';x,y)$ will be given. Our  last section is devoted to deriving some generating functions for $F_{2}(A;B,B';C,\\
C';x,y).$
\section{Proof of Theorem \ref{t1}}
To carry out our study, we need some auxiliary results which will be used frequently  in this paper.

The results in the  following proposition  follows readily from some properties of Jacobi sums.
\begin{prop}\label{pp2-1} \emph{(See \cite [(2.6), (2.7), (2.8) and (2.12)]{Gr})} If $A,B\in \widehat{\mathbb{F}_{q}},$ then
\begin{align}
  {A\choose B}&={A\choose A\overline{B}},\label{f2}\\
  {A\choose B}&={B\overline{A}\choose B}B(-1)\label{f1},\\
  {A\choose B}&={\overline{B}\choose \overline{A}}AB(-1),\label{f3}\\
{A\choose \varepsilon}&={A\choose A}=-1+(q-1)\delta(A),\label{f4}
\end{align}
where $\delta(\chi)$ is a function on characters given by
\begin{equation*}
\delta(\chi)=\left\{
               \begin{array}{ll}
                 1  & \hbox{if $\chi=\varepsilon$} \\
                 0  & \hbox{otherwise}
               \end{array}.
             \right.
\end{equation*}
\end{prop}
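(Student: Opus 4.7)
The four identities concern the modified binomial coefficient ${A\choose B}=B(-1)J(A,\overline{B})$, where $J(\chi,\lambda)=\sum_u\chi(u)\lambda(1-u)$. My plan is to unfold each side into Jacobi sums and apply one of two elementary substitutions, together with the orthogonality relation $\sum_{u\in\mathbb{F}_q}\chi(u)=(q-1)\delta(\chi)$ (valid because $\chi(0)=0$) and the standing fact $\chi(-1)\overline{\chi}(-1)=1$.

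The key auxiliary identity I would establish first is
\[
J(\chi,\lambda)=\lambda(-1)\,J(\overline{\chi\lambda},\lambda),
\]
obtained by substituting $u\mapsto 1/u$ in the defining sum: the $u=0$ term drops by $\chi(0)=0$, and $\chi(1/u)\lambda((u-1)/u)=\overline{\chi\lambda}(u)\lambda(-1)\lambda(1-u)$ after using $\lambda(u-1)=\lambda(-1)\lambda(1-u)$. Specializing $(\chi,\lambda)=(A,\overline{B})$ and using $B(-1)\overline{B}(-1)=1$ yields ${A\choose B}=J(\overline{A}B,\overline{B})$. The same identity with $(\chi,\lambda)=(A,\overline{A}B)$ followed by the symmetry $J(\chi,\lambda)=J(\lambda,\chi)$ (from $u\mapsto 1-u$) gives ${A\choose A\overline{B}}=J(\overline{A}B,\overline{B})$ as well, proving \eqref{f2}. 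Identity \eqref{f1} then follows from the same calculation: $B(-1){B\overline{A}\choose B}=B(-1)^2J(\overline{A}B,\overline{B})=J(\overline{A}B,\overline{B})={A\choose B}$.

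For \eqref{f3} only the symmetry $J(\chi,\lambda)=J(\lambda,\chi)$ is needed:
\[
AB(-1){\overline{B}\choose\overline{A}}=AB(-1)\overline{A}(-1)J(\overline{B},A)=B(-1)J(A,\overline{B})={A\choose B}.
\]
For \eqref{f4}, the first equality ${A\choose\varepsilon}={A\choose A}$ is just \eqref{f2} applied with $B=\varepsilon$ (so that $A\overline{B}=A$). For the common value I would compute $J(A,\varepsilon)=\sum_{u\neq 1}A(u)=(q-1)\delta(A)-1$ directly via orthogonality (using $A(0)=0$ and $A(1)=1$), which gives ${A\choose\varepsilon}=\varepsilon(-1)J(A,\varepsilon)=-1+(q-1)\delta(A)$.

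The only step requiring real care is sign bookkeeping: each $u\mapsto1/u$ substitution produces a $\lambda(-1)$ factor that must be paired against the external $B(-1)$ or $A(-1)$ coming from the definition of ${A\choose B}$, using $\chi(-1)\overline{\chi}(-1)=1$. The boundary values $u=0$ (in the substitution) and $u=1$ (in the orthogonality step) each deserve a brief sanity check but contribute zero. Otherwise every identity collapses to one line of character-value algebra.
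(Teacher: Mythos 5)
Your proof is correct. The paper itself offers no proof of this proposition---it simply cites Greene's (2.6)--(2.8) and (2.12) with the remark that the identities ``follow readily from some properties of Jacobi sums''---and your argument (the substitutions $u\mapsto 1/u$ and $u\mapsto 1-u$ in the Jacobi sum, plus orthogonality for \eqref{f4}) is exactly the standard derivation behind those citations, with the degenerate cases such as $\chi\lambda=\varepsilon$ correctly covered by the paper's convention $\chi(0)=0$ for all characters including $\varepsilon$.
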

The following result is also very important in the derivation of Theorem \ref{t1}.
\begin{prop}\label{T} For any character $A\in \widehat{\mathbb{F}_{q}}$ and $x,y \in \mathbb{F}_{q},$ we have
\begin{equation*}
  A(1+x+y)=\left\{
             \begin{array}{ll}
               A(x)  & \hbox{if $y=-1$} \\
               \delta(x)\delta(y)+\frac{1}{q-1}\left(\delta(x)\sum_{\chi}{A\choose \chi}\chi(y)+\delta(y)\sum_{\chi}{A\choose \chi}\chi(x)\right)& \\
 ~~~~~~~~~~~~~~~~~~~~~~~~~~~~+\frac{1}{(q-1)^2}\sum_{\chi, \lambda}{A\choose \chi}{A\overline{\chi}\choose \lambda}\chi(x)\lambda(y) & \hbox{if $y\neq-1$}
             \end{array},
           \right.
\end{equation*}
where each sum is over all multiplicative characters of $\mathbb{F}_{q}.$
\end{prop}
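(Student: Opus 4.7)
The plan is to split on whether $y=-1$. When $y=-1$ the identity $1+x+y=x$ gives $A(1+x+y)=A(x)$ directly, which is the first branch. So the whole content lies in the case $y\neq -1$.

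For $y\neq -1$ the element $1+y$ is a unit, and I would begin by factoring
\[
A(1+x+y)\;=\;A(1+y)\cdot A\!\left(1+\frac{x}{1+y}\right),
\]
using the multiplicativity of the lifted character $A$. The degenerate case $1+x+y=0$ causes no problem: there $1+x/(1+y)=0$, and both sides vanish because $A(0)=0$. Next, I would apply the binomial theorem (the displayed theorem just above) to the second factor; since $1+y\neq 0$ we have $\chi(x/(1+y))=\chi(x)\overline{\chi}(1+y)$ and $\delta(x/(1+y))=\delta(x)$, so
\[
A\!\left(1+\frac{x}{1+y}\right)\;=\;\delta(x)+\frac{1}{q-1}\sum_{\chi}{A\choose \chi}\chi(x)\overline{\chi}(1+y).
\]
Multiplying by $A(1+y)$ absorbs that character into $\overline{\chi}$, producing
\[
A(1+x+y)\;=\;\delta(x)A(1+y)+\frac{1}{q-1}\sum_{\chi}{A\choose \chi}\chi(x)\,A\overline{\chi}(1+y).
\]

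Now I would apply the binomial theorem a second time, once to $A(1+y)$ and once inside the sum to each $A\overline{\chi}(1+y)$, getting
\[
A(1+y)=\delta(y)+\frac{1}{q-1}\sum_{\lambda}{A\choose \lambda}\lambda(y),\qquad
A\overline{\chi}(1+y)=\delta(y)+\frac{1}{q-1}\sum_{\lambda}{A\overline{\chi}\choose \lambda}\lambda(y).
\]
Substituting and expanding yields four terms: a pure $\delta(x)\delta(y)$ term, the two single-sum mixed terms
\[
\frac{\delta(x)}{q-1}\sum_{\lambda}{A\choose \lambda}\lambda(y)\quad\text{and}\quad \frac{\delta(y)}{q-1}\sum_{\chi}{A\choose \chi}\chi(x),
\]
and the double sum $\frac{1}{(q-1)^2}\sum_{\chi,\lambda}{A\choose \chi}{A\overline{\chi}\choose \lambda}\chi(x)\lambda(y)$. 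These match the claimed formula term for term.

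I do not expect any real obstacle. The only point worth a sanity check is consistency at $1+x+y=0$, which is automatic from $A(0)=0$ once the inner binomial expansion is applied to $A(1+x/(1+y))=A(0)$; the rest is two invocations of the binomial theorem plus the elementary algebra of combining them.
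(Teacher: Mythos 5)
Your proposal is correct and follows essentially the same route as the paper: split on $y=-1$, factor $A(1+x+y)=A(1+y)A\bigl(1+\tfrac{x}{1+y}\bigr)$, and apply the binomial theorem twice. The extra sanity check at $1+x+y=0$ is harmless but not needed beyond what the paper already does implicitly.
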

\noindent \emph{Proof.}
It is obvious that $A(1+x+y)=A(x)$ when $y=-1.$ We only need to consider the case $y\neq -1.$  When $y\neq -1,$ by the binomial theorem, we have
\begin{align*}
  A(1+x+y)&=A(1+y)A\left(1+\frac{x}{1+y}\right)\\
&=\delta(x)A(1+y)+\frac{1}{q-1}\sum_{\chi}{A\choose \chi}\chi(x)A\overline{\chi}(1+y)\\
&=\delta(x)\delta(y)+\frac{1}{q-1}\left(\delta(x)\sum_{\chi}{A\choose \chi}\chi(y)+\delta(y)\sum_{\chi}{A\choose \chi}\chi(x)\right)\\
&~+\frac{1}{(q-1)^2}\sum_{\chi, \lambda}{A\choose \chi}{A\overline{\chi}\choose \lambda}\chi(x)\lambda(y).
\end{align*}
This completes the proof of Proposition \ref{T}.\qed

Actually, Proposition \ref{T} can be considered as the finite field analogue of the trinomial theorem:
\begin{equation*}
(1+x+y)^{a}=\sum_{i,j\geq 0}{a\choose i}{a-i\choose j}x^i y^j.
\end{equation*}

We now turn to our proof of Theorem \ref{t1}\\
\emph{Proof of Theorem \ref{t1}.} It is clear that $F_{2}(A;B,B';C,C';x,y)=0$ for $y=0.$ We now consider the case $y\neq 0.$
When $y\neq 0,$ if $v=y^{-1},$ then
\begin{equation*}
  \overline{A}(1-ux-vy)=\overline{A}(-ux);
\end{equation*}
if $v\neq y^{-1},$ then from Proposition \ref{T}, we have
\begin{align*}
  \overline{A}(1-ux-vy)&=\delta(ux)\delta(v)+\frac{1}{q-1}\left(\delta(ux)\sum_{\chi}{\overline{A}\choose \chi}\chi(-vy)+\delta(v)\sum_{\chi}{\overline{A}\choose \chi}\chi(-ux)\right) \\
&~+\frac{1}{(q-1)^2}\sum_{\chi, \lambda}{\overline{A}\choose \chi}{\overline{A}\overline{\chi}\choose \lambda}\chi(-ux)\lambda(-vy).
\end{align*}
It is easily seen from the binomial theorem that
\begin{equation*}
  \sum_{\chi, \lambda}{\overline{A}\choose \chi}{\overline{A}\overline{\chi}\choose \lambda}\chi(-ux)\lambda(-1)=\sum_{\chi}{\overline{A}\choose \chi}\chi(-ux)\sum_{\lambda}{\overline{A}\overline{\chi}\choose \lambda}\lambda(-1)=0,
\end{equation*}
which implies that
\begin{align*}
\sum_{u\in \mathbb{F}_{q}}B(u)B'(y^{-1})\overline{B}C(1-u)\overline{B'}C'(1-y^{-1})\sum_{\chi, \lambda}{\overline{A}\choose \chi}{\overline{A}\overline{\chi}\choose \lambda}\chi(-ux)\lambda(-1)=0.
\end{align*}
Then, by \eqref{f1},
\begin{align*}
    &\sum_{u\in \mathbb{F}_{q},v\neq y^{-1}}B(u)B'(v)\overline{B}C(1-u)\overline{B'}C'(1-v)\sum_{\chi, \lambda}{\overline{A}\choose \chi}{\overline{A}\overline{\chi}\choose \lambda}\chi(-ux)\lambda(-vy)\\
&=\sum_{u,v\in \mathbb{F}_{q}}B(u)B'(v)\overline{B}C(1-u)\overline{B'}C'(1-v)\sum_{\chi, \lambda}{\overline{A}\choose \chi}{\overline{A}\overline{\chi}\choose \lambda}\chi(-ux)\lambda(-vy)\\
&~-\sum_{u\in \mathbb{F}_{q}}B(u)B'(y^{-1})\overline{B}C(1-u)\overline{B'}C'(1-y^{-1})\sum_{\chi, \lambda}{\overline{A}\choose \chi}{\overline{A}\overline{\chi}\choose \lambda}\chi(-ux)\lambda(-1)\\
&=\sum_{\chi, \lambda}{A\chi\choose \chi}{A\chi\lambda\choose \lambda}{\overline{C}\overline{\chi}\choose B\overline{C}}{\overline{C'}\overline{\lambda}\choose B'\overline{C'}}\chi(x)\lambda(y).
\end{align*}
Thus, by the fact that $\varepsilon(xy)\delta(ux)B(u)=\delta(v)B'(v)=0,$ \eqref{f2} and \eqref{f1},
\begin{align*}
&F_{2}(A;B,B';C,C';x,y)=\varepsilon(xy)BB'CC'(-1)\left(\sum_{u\in \mathbb{F}_{q},v=y^{-1}}+\sum_{u\in \mathbb{F}_{q},v\neq y^{-1}}\right)\\
&=\overline{A}(-x)\overline{C'}(y)\overline{B'}C'(1-y){\overline{A}B\choose B\overline{C}}+BB'CC'(-1)\sum_{v\neq y^{-1}}B(u)B'(v)\overline{B}C(1-u)\overline{B'}C'(1-v)\\
&~~\cdot  \frac{1}{(q-1)^2}\sum_{\chi, \lambda}{\overline{A}\choose \chi}{\overline{A}\overline{\chi}\choose \lambda}\chi(-ux)\lambda(-vy)\\
&=\frac{1}{(q-1)^2}\sum_{\chi, \lambda}{A\chi\choose \chi}{A\chi\lambda\choose \lambda}{B\chi\choose C\chi}{B'\lambda\choose C'\lambda}\chi(x)\lambda(y)+\overline{A}(-x)\overline{C'}(y)\overline{B'}C'(1-y){\overline{A}B\choose B\overline{C}}.
\end{align*}
In view of the above, we complete the proof of Theorem \ref{t1}.\qed
\section{Reduction and Transformation formulae}
In this section we give some reduction and transformation formulae for $F_{2}(A;B,B';C,C';x,y).$
In order to derive these formulae we need some auxiliary results.
\begin{prop}\label{p2}\emph{(See \cite [Corollary 3.16 and Theorem 3.15]{Gr})} For any $A,B,C,D\in \widehat{\mathbb{F}_{q}}$ and $x \in \mathbb{F}_{q},$ we have
\begin{align}
  {}_{2}F_1 \left(\begin{matrix}
A, \varepsilon \\
C \end{matrix}
\bigg| x \right)&={C\choose A}A(-1)\overline{C}(x)\overline{A}C(1-x)
-C(-1)\varepsilon(x)\label{p2-1}\\
&~~+(q-1)A(-1)\delta(1-x)\delta(\overline{A}C),\notag\\
{}_{2}F_1 \left(\begin{matrix}
A, B \\
A \end{matrix}
\bigg| x \right)&={B\choose A}\varepsilon(x)\overline{B}(1-x)-\overline{A}(-x)\label{p2-33}\\
&~~+(q-1)A(-1)\delta(1-x)\delta(B),\notag\\
{}_{3}F_2 \left(\begin{matrix}
A,B,C\\
A,D \end{matrix}
\bigg| x \right)&={B\choose A}{}_{2}F_1 \left(\begin{matrix}
B,C \\
D \end{matrix}
\bigg| x \right)-\overline{A}(-x){C\overline{A}\choose D\overline{A}}\label{p2-2}\\
&~~+(q-1)A(-1)\overline{D}(x)\overline{C}D(1-x)\delta(B).\notag
\end{align}
\end{prop}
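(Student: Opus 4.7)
The plan is to derive all three identities from a single character-sum template. Each captures a degenerate configuration of $_{2}F_1$ or $_{3}F_2$ (a trivial upper parameter, or a collision between an upper and lower parameter), and I expect the correction terms on the right-hand side to arise precisely from the singular character-indices (those where $A\chi=\varepsilon$ or a similar coincidence) that \eqref{f4} evaluates explicitly.

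For \eqref{p2-1}, I would start from the character-sum representation
\[
{}_{2}F_1\!\left(\begin{matrix}A,\varepsilon\\ C\end{matrix}\bigg|\,x\right)=\frac{1}{q-1}\sum_{\chi}{A\chi\choose\chi}{\chi\choose C\chi}\chi(x)
\]
and use \eqref{f1}--\eqref{f2} to recast ${\chi\choose C\chi}=C(-1)\chi(-1){C\choose\overline{\chi}}$. After a change of summation index and a second application of Proposition \ref{pp2-1} to the remaining binomial, the expression separates into a principal sum that contracts via the Binomial Theorem to ${C\choose A}A(-1)\overline{C}(x)\overline{A}C(1-x)$, plus boundary contributions from the $\chi=\varepsilon$ index (yielding the $-C(-1)\varepsilon(x)$ summand through \eqref{f4}) and from $x=1$ (where direct evaluation via \eqref{e1-2} pins down the $\delta(1-x)\delta(\overline{A}C)$ correction).

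Identity \eqref{p2-33} admits the same approach. Starting from
\[
{}_{2}F_1\!\left(\begin{matrix}A,B\\ A\end{matrix}\bigg|\,x\right)=\frac{1}{q-1}\sum_{\chi}{A\chi\choose\chi}{B\chi\choose A\chi}\chi(x),
\]
I would rewrite ${B\chi\choose A\chi}={B\chi\choose B\overline{A}}$ using \eqref{f2} to decouple the $\chi$-dependence, then isolate the degenerate index $\chi=\overline{A}$ (where ${A\chi\choose\chi}={\varepsilon\choose\overline{A}}$ is computed through \eqref{f4}): this singular index produces the $-\overline{A}(-x)$ term, while the remainder telescopes via the Binomial Theorem into ${B\choose A}\varepsilon(x)\overline{B}(1-x)$. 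The $\delta$ correction at $x=1$ is again dictated by \eqref{e1-2}.

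For \eqref{p2-2}, I would expand $_{3}F_2$ via its three-factor character-sum representation (the natural extension of \eqref{e1-1}) and substitute the reduction of ${B\chi\choose A\chi}$ from my treatment of \eqref{p2-33} directly into the integrand. The main piece reassembles, after an index shift, into ${B\choose A}\cdot{}_{2}F_1(B,C;D\,|\,x)$; the $\chi=\overline{A}$ contribution combines with the remaining ${C\chi\choose D\chi}$ factor to give $-\overline{A}(-x){C\overline{A}\choose D\overline{A}}$, and the $\delta(B)$ correction is forced by comparing with \eqref{e1-2} at $x=1$. The chief obstacle across all three parts will not be conceptual but combinatorial: one must carefully track the $(-1)$-character factors, repeatedly shuffle binomial arguments through Proposition \ref{pp2-1}, and correctly isolate the singular indices. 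The arguments are otherwise routine character-sum manipulations in Greene's style.
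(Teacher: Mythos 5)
The paper itself offers no proof of Proposition \ref{p2}: all three identities are quoted from Greene [Gr, Theorem 3.15 and Corollary 3.16], so the only thing to compare your attempt against is whether your sketch would actually establish them. It would not, as written, for the following reasons.

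The central gap is that your contraction mechanism does not exist with the tools you invoke. Starting from $(q-1)\,{}_{2}F_1(A,B;A\mid x)=\sum_{\chi}{A\chi\choose \chi}{B\chi\choose A\chi}\chi(x)$, no amount of shuffling via \eqref{f2}--\eqref{f4} removes the fact that you have a \emph{product of two $\chi$-dependent binomial coefficients}; the Binomial Theorem only evaluates $\sum_{\chi}{A\choose\chi}\chi(x)$ with a single $\chi$-dependent factor and a fixed upper entry, so "the remainder telescopes via the Binomial Theorem" is not a step you can perform. The missing ingredient is the Jacobi-sum product formula, i.e.\ Greene's (2.15), stated in this paper as Proposition \ref{pp4-1}: applied with $X=A\chi$, $Y=\chi$, $Z=B\chi$ it gives ${A\chi\choose\chi}{B\chi\choose A\chi}={B\choose A}{B\chi\choose\chi}-(q-1)\chi(-1)\delta(A\chi)+(q-1)A(-1)\delta(B)$, after which \eqref{t32-1} (not the Binomial Theorem proper) evaluates $\sum_\chi{B\chi\choose\chi}\chi(x)=(q-1)\varepsilon(x)\overline{B}(1-x)$ and $\sum_\chi{C\chi\choose D\chi}\chi(x)$, yielding \eqref{p2-33} and, inserted into the ${}_3F_2$ sum, \eqref{p2-2}. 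Relatedly, your claimed origin of the $-\overline{A}(-x)$ term is quantitatively wrong: the single summand $\chi=\overline{A}$ of $\frac{1}{q-1}\sum_\chi{A\chi\choose\chi}{B\chi\choose A\chi}\chi(x)$ equals $\frac{1}{q-1}\overline{A}(-x)$ for generic $A,B$, not $-\overline{A}(-x)$; the correct term comes from the $-(q-1)\chi(-1)\delta(A\chi)$ correction inside the product formula, which carries the factor $q-1$ that your "isolate the singular index" step loses. The same objection applies to your treatment of \eqref{p2-1}, where after the index change you are still left with $\sum_\chi{A\overline\chi\choose\overline\chi}{C\choose\chi}\overline\chi(-x)$, again a product of two $\chi$-dependent binomials; for that identity the cleanest route is instead the defining sum ${}_{2}F_1(A,\varepsilon;C\mid x)=\varepsilon(x)C(-1)\sum_{y}\varepsilon(y)C(1-y)\overline{A}(1-xy)$, completing the sum at $y=0$ and substituting to reach a single Jacobi sum. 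Finally, for \eqref{p2-2} the correction $(q-1)A(-1)\overline{D}(x)\overline{C}D(1-x)\delta(B)$ is supported on essentially all $x$, not just $x=1$, so it cannot be "forced by comparing with \eqref{e1-2} at $x=1$"; that comparison can only pin down terms carrying a $\delta(1-x)$, as in \eqref{p2-1} and \eqref{p2-33}.
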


From the definition of $F_{2}(a;b,b';c,c';x,y)$ we know that
\begin{align*}
  F_{2}(a;b,0;c,c';x,y)&={}_{2}F_1 \left(\begin{matrix}
a, b \\
c \end{matrix}
\bigg| x \right),\\
F_{2}(a;0,b';c,c';x,y)&={}_{2}F_1 \left(\begin{matrix}
a, b' \\
c' \end{matrix}
\bigg| y \right).
\end{align*}
We now give a finite field analogue for the above identities.
\begin{thm}\label{t31}Let $A,B,B', C,C'\in \widehat{\mathbb{F}_{q}}$ and $x,~y \in \mathbb{F}_{q}.$  If $y\neq 1,$ then
\begin{align}
  &F_{2}(A;B,\varepsilon;C,C';x,y)\label{t31-1}\\
&=-\varepsilon(y)C'(-1){}_{2}F_1 \left(\begin{matrix}
A, B \\
C \end{matrix}
\bigg| x \right)+\overline{C'}(y)\overline{A}C'(1-y){A\overline{C'}\choose A}{}_{2}F_1 \left(\begin{matrix}
A\overline{C'}, B \\
C \end{matrix}
\bigg| \frac{x}{1-y} \right)\notag\\
&~+(q-1)A(-1)\overline{C}(x)\overline{C'}(y)\overline{B}C^2(1-y)B\overline{C}(1-x-y)\delta(A\overline{C'});\notag
\end{align}
if $x\neq 1,$ then
\begin{align}
  &F_{2}(A;\varepsilon, B';C,C';x,y)\label{t31-2}\\
&=-\varepsilon(x)C(-1){}_{2}F_1 \left(\begin{matrix}
A, B' \\
C' \end{matrix}
\bigg| y \right)+\overline{C}(x)\overline{A}C(1-x){A\overline{C}\choose A}{}_{2}F_1 \left(\begin{matrix}
A\overline{C}, B' \\
C' \end{matrix}
\bigg| \frac{y}{1-x} \right)\notag\\
&~+(q-1)A(-1)\overline{C'}(y)\overline{C}(x)\overline{B'}C'^2(1-x)B'\overline{C'}(1-x-y)\delta(A\overline{C}).\notag
\end{align}
\end{thm}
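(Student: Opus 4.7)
The plan is to derive Theorem~\ref{t31} from the alternative representation of $F_2$ in Theorem~\ref{t1} combined with the ${}_{2}F_1$ reduction formula \eqref{p2-1}. Setting $B' = \varepsilon$ in Theorem~\ref{t1} collapses ${B'\lambda\choose C'\lambda}$ to ${\lambda\choose C'\lambda}$ and simplifies $\overline{B'}C'(1-y)$ to $C'(1-y)$. By \eqref{e1-1}, the inner sum over $\lambda$ then becomes ${}_{2}F_1\left(\begin{matrix}A\chi, \varepsilon \\ C'\end{matrix}\bigg| y\right)$, so
\[
F_{2}(A;B,\varepsilon;C,C';x,y) = \frac{1}{q-1}\sum_{\chi}{A\chi\choose\chi}{B\chi\choose C\chi}\chi(x)\cdot {}_{2}F_1\left(\begin{matrix}A\chi, \varepsilon \\ C'\end{matrix}\bigg| y\right) + T_2,
\]
where $T_2 = \overline{A}(-x)\overline{C'}(y)C'(1-y){\overline{A}B\choose B\overline{C}}$.

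Using the hypothesis $y \neq 1$, the reduction formula \eqref{p2-1} of Proposition~\ref{p2} leaves only the two surviving pieces of the inner ${}_{2}F_1$ (the $\delta(1-y)$ term vanishes). The character-free piece $-C'(-1)\varepsilon(y)$, after reassembly via \eqref{e1-1}, contributes $-\varepsilon(y)C'(-1){}_{2}F_1\left(\begin{matrix}A, B \\ C\end{matrix}\bigg| x\right)$, matching the first target term of \eqref{t31-1}. The other piece ${C'\choose A\chi}A\chi(-1)\overline{C'}(y)\overline{A\chi}C'(1-y)$, after extracting the $\chi$-independent factors, leaves
\[
\overline{C'}(y)\overline{A}C'(1-y)A(-1)\cdot\frac{1}{q-1}\sum_{\chi}{A\chi\choose\chi}{B\chi\choose C\chi}{C'\choose A\chi}\chi(-1)\chi\left(\frac{x}{1-y}\right).
\]
Applying \eqref{f1} to turn ${C'\choose A\chi}$ into ${A\overline{C'}\chi\choose A\chi}A\chi(-1)$, and \eqref{f2} twice to rewrite ${A\chi\choose\chi} = {A\chi\choose A}$ and ${A\overline{C'}\chi\choose A\chi} = {A\overline{C'}\chi\choose \overline{C'}}$, converts the summand into ${A\chi\choose A}{A\overline{C'}\chi\choose \overline{C'}}{B\chi\choose C\chi}\chi(x/(1-y))$.

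The main obstacle is then the character identity
\[
{A\chi\choose A}{A\overline{C'}\chi\choose \overline{C'}} = {A\overline{C'}\choose A}{A\overline{C'}\chi\choose A\overline{C'}},
\]
which, combined with one more use of \eqref{f2} to recognize ${A\overline{C'}\chi\choose A\overline{C'}} = {A\overline{C'}\chi\choose\chi}$, converts the $\chi$-sum into exactly ${A\overline{C'}\choose A}{}_{2}F_1\left(\begin{matrix}A\overline{C'}, B \\ C\end{matrix}\bigg| \frac{x}{1-y}\right)$. This identity holds generically through the Gauss-sum relation $g(\psi)g(\overline{\psi}) = \psi(-1)q$ applied to the four Jacobi-sum factors, but degenerates precisely at $A\overline{C'} = \varepsilon$, because then $g(A\overline{C'}) = -1$ departs from the value the generic formula predicts. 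Tracking this exceptional contribution carefully and combining it with $T_2$---which at $A = C'$ simplifies through \eqref{f1}--\eqref{f4} and the evaluation \eqref{e1-2} of ${}_{2}F_1$ at $1$---produces precisely the correction $(q-1)A(-1)\overline{C}(x)\overline{C'}(y)\overline{B}C^2(1-y)B\overline{C}(1-x-y)\delta(A\overline{C'})$ appearing in \eqref{t31-1}, with the factor $B\overline{C}(1-x-y)$ emerging from bookkeeping the surviving character evaluations in this degenerate case.

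Finally, \eqref{t31-2} follows from \eqref{t31-1} with no additional computation via the symmetry \eqref{p11-1}: since $F_{2}(A;\varepsilon,B';C,C';x,y) = F_{2}(A;B',\varepsilon;C',C;y,x)$, applying \eqref{t31-1} with $(B,C,C',x,y)$ replaced by $(B',C',C,y,x)$ (so the hypothesis $y \neq 1$ translates into $x \neq 1$) yields \eqref{t31-2} directly.
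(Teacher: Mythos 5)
Your strategy coincides with the paper's up to the last step: the paper also sets $B'=\varepsilon$ in Theorem \ref{t1}, identifies the inner $\lambda$-sum with $(q-1)\,{}_{2}F_1\left(\begin{matrix}A\chi,\varepsilon\\ C'\end{matrix}\Big|\,y\right)$, reduces it by \eqref{p2-1}, recovers the term $-\varepsilon(y)C'(-1)\,{}_{2}F_1\left(\begin{matrix}A,B\\ C\end{matrix}\Big|\,x\right)$, and obtains \eqref{t31-2} from \eqref{t31-1} via \eqref{p11-1}. Where the paper then recognizes the remaining $\chi$-sum as ${}_{3}F_2\left(\begin{matrix}A,A\overline{C'},B\\ A,C\end{matrix}\Big|\,\tfrac{x}{1-y}\right)$ and applies the three-term reduction \eqref{p2-2}, you instead try to prove the underlying binomial identity by Gauss sums, and this is where there is a genuine gap.

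Your identity ${A\chi\choose A}{A\overline{C'}\chi\choose \overline{C'}} = {A\overline{C'}\choose A}{A\overline{C'}\chi\choose A\overline{C'}}$ does \emph{not} degenerate only at $A\overline{C'}=\varepsilon$: it also fails at $\chi=\overline{A}$, because the left-hand side's Gauss-sum evaluation uses $g(A\chi)g(\overline{A\chi})=A\chi(-1)q$, which breaks down when $A\chi=\varepsilon$. Concretely, for $A,C',A\overline{C'}\neq\varepsilon$ and $\chi=\overline{A}$ the left side equals $A(-1)$ while the right side equals $A(-1)q$, a discrepancy of $(q-1)A(-1)$. After dividing by $q-1$, restoring ${B\chi\choose C\chi}\chi\left(\tfrac{x}{1-y}\right)$ at $\chi=\overline{A}$ and the prefactor $\overline{C'}(y)\overline{A}C'(1-y)$, this exceptional term contributes exactly $-\overline{A}(-x)\overline{C'}(y)C'(1-y){\overline{A}B\choose B\overline{C}}$ --- it is the middle term $-\overline{A}(-x){C\overline{A}\choose D\overline{A}}$ of \eqref{p2-2} --- and it is what cancels $T_2$ for \emph{all} $A,C'$. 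Your plan to absorb $T_2$ into the $\delta(A\overline{C'})$ correction cannot work, since $T_2$ is present whether or not $A=C'$; as written, your argument leaves the spurious term $\overline{A}(-x)\overline{C'}(y)C'(1-y){\overline{A}B\choose B\overline{C}}$ in the final formula whenever $A\neq C'$, which contradicts \eqref{t31-1}. Restoring the missing $\chi=\overline{A}$ exceptional contribution is exactly equivalent to invoking the full reduction \eqref{p2-2} as the paper does.
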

\noindent{\it Proof.} We first prove \eqref{t31-1}. It follows from  \eqref{p2-1} that
\begin{align*}
  \sum_{\lambda}{A\chi\lambda\choose \lambda}{\lambda\choose C'\lambda}\lambda(y)&=(q-1){}_{2}F_1 \left(\begin{matrix}
A\chi, \varepsilon \\
C' \end{matrix}
\bigg| y \right)\\
&=(q-1){C'\choose A\chi}A\chi(-1)\overline{C'}(y)\overline{A}\overline{\chi}C'(1-y)-(q-1)C'(-1)\varepsilon(y).
\end{align*}
Then, using the above identity in Theorem \ref{t1}, by  \eqref{f2}--\eqref{f3}, \eqref{e1-1} and \eqref{p2-2}, and canceling some terms, we get
\begin{align*}
  &F_{2}(A;B,\varepsilon;C,C';x,y)\\
&=\frac{1}{(q-1)^2}\sum_{\chi}{A\chi\choose \chi}{B\chi\choose C\chi}\chi(x)\sum_{\lambda}{A\chi\lambda\choose \lambda}{\lambda\choose C'\lambda}\lambda(y)+\overline{A}(-x)\overline{C'}(y)C'(1-y){\overline{A}B\choose B\overline{C}}\\
&=\overline{C'}(y)\overline{A}C'(1-y)
{}_{3}F_2 \left(\begin{matrix}
A, A\overline{C'},B \\
 A,C \end{matrix}
\bigg| \frac{x}{1-y} \right)
-\varepsilon(y)C'(-1){}_{2}F_1 \left(\begin{matrix}
A, B \\
C \end{matrix}
\bigg| x \right)\\
&~+\overline{A}(-x)\overline{C'}(y)C'(1-y){\overline{A}B\choose B\overline{C}}\\
&=-\varepsilon(y)C'(-1){}_{2}F_1 \left(\begin{matrix}
A, B \\
C \end{matrix}
\bigg| x \right)+\overline{C'}(y)\overline{A}C'(1-y){A\overline{C'}\choose A}{}_{2}F_1 \left(\begin{matrix}
A\overline{C'}, B \\
C \end{matrix}
\bigg| \frac{x}{1-y} \right)\\
&~+(q-1)A(-1)\overline{C}(x)\overline{C'}(y)\overline{A}\overline{B}C^2C'(1-y)B\overline{C}(1-x-y)\delta(A\overline{C'}).
\end{align*}
This proves \eqref{t31-1}.

Identity \eqref{t31-2} follows from \eqref{t31-1} and \eqref{p11-1}. This completes the proof of Theorem \ref{t31}. \qed

In \cite [\S 9.5, (3)]{B}, Bailey gave the following reduction formula:
\begin{equation*}
F_{2}(a;b,b';b,c';x,y)=(1-x)^{-a}{}_{2}F_1 \left(\begin{matrix}
a, b' \\
c' \end{matrix}
\bigg| \frac{y}{1-x} \right).
\end{equation*}
We also deduce the finite field analogue for the above formula.
\begin{thm}\label{t32}For any $A,B,B', C'\in \widehat{\mathbb{F}_{q}}$ and $x\in \mathbb{F}_{q}\backslash \{1\},~y \in \mathbb{F}_{q},$ we have
\begin{align*}
  &F_{2}(A;B,B';B,C';x,y)\\
&=-\varepsilon(x)\overline{A}(1-x){}_{2}F_1 \left(\begin{matrix}
A, B' \\
C' \end{matrix}
\bigg| \frac{y}{1-x} \right)+\overline{B}(x){A\overline{B}\choose \overline{B}}{}_{2}F_1 \left(\begin{matrix}
A\overline{B},B' \\
C' \end{matrix}
\bigg|y \right)\\
&~+(q-1)\delta(A\overline{B})\overline{A}(-x)\overline{C'}(y)\overline{B'}C'(1-y).
\end{align*}
\end{thm}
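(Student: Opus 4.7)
The plan is to apply Theorem~\ref{t1} with $C=B$ and simplify the resulting double character sum step by step. With this choice, the main double sum acquires the factor ${B\chi\choose B\chi}=-1+(q-1)\delta(B\chi)$ and the boundary term acquires ${\overline{A}B\choose \varepsilon}=-1+(q-1)\delta(A\overline{B})$, both via \eqref{f4}. The $(q-1)\delta(B\chi)$ piece collapses the $\chi$-sum to $\chi=\overline{B}$, and after using \eqref{e1-1} to recognise the remaining $\lambda$-sum as $(q-1){}_{2}F_1(A\overline{B},B';C';y)$, this produces the middle term $\overline{B}(x){A\overline{B}\choose\overline{B}}{}_{2}F_1(A\overline{B},B';C';y)$ of the theorem. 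The $(q-1)\delta(A\overline{B})$ piece directly yields the last term.

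What remains is to show, for $x\in\mathbb{F}_q\setminus\{0,1\}$ and $y\neq 0$, the identity
\begin{equation*}
-T-\overline{A}(-x)\overline{C'}(y)\overline{B'}C'(1-y) = -\overline{A}(1-x)\,{}_{2}F_1(A,B';C';y/(1-x)),
\end{equation*}
where $T:=\frac{1}{(q-1)^2}\sum_{\chi,\lambda}{A\chi\choose\chi}{A\chi\lambda\choose\lambda}{B'\lambda\choose C'\lambda}\chi(x)\lambda(y)$. I would first use \eqref{e1-1} on the inner $\lambda$-sum to rewrite $T=\frac{1}{q-1}\sum_{\chi}{A\chi\choose\chi}\chi(x)\,{}_{2}F_1(A\chi,B';C';y)$, then expand this ${}_{2}F_1$ via its defining character sum over $v$, and exchange the order of summation.

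The heart of the argument is the evaluation of $\Sigma(v):=\sum_\chi{A\chi\choose\chi}\chi(x)\overline{A\chi}(1-yv)$. Using \eqref{f1} in the form ${A\chi\choose\chi}={\overline{A}\choose\chi}\chi(-1)$, for $1-yv\neq 0$ one obtains $\Sigma(v)=\overline{A}(1-yv)\sum_\chi{\overline{A}\choose\chi}\chi(-x/(1-yv))$, which by the binomial theorem equals $(q-1)\overline{A}(1-x-yv)$ provided $x\neq 0$. The case $v=1/y$ gives $\Sigma(1/y)=0$, so one must subtract a correction $(q-1)\overline{A}(-x)\delta(1-yv)$ to obtain a uniform formula valid for all $v$. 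Substituting back, using the factorisation $\overline{A}(1-x-yv)=\overline{A}(1-x)\overline{A}(1-yv/(1-x))$ (legitimate since $x\neq 1$), and re-summing over $v$ via the definition of ${}_{2}F_1(A,B';C';y/(1-x))$ reassembles the main piece into $\overline{A}(1-x){}_{2}F_1(A,B';C';y/(1-x))$; a short direct computation of $B'(1/y)\overline{B'}C'(1-1/y)$ shows the correction term collapses to exactly $-\overline{A}(-x)\overline{C'}(y)\overline{B'}C'(1-y)$, cancelling the boundary term inherited from Theorem~\ref{t1}.

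The main obstacle is the careful bookkeeping of these $\delta$-boundary contributions: one must verify that the correction from the $v=1/y$ exception exactly cancels the boundary term of Theorem~\ref{t1}, which relies on the sign identity $B'C'(-1)\cdot\overline{B'}C'(-1)=1$ together with the factorisation $\overline{B'}C'((y-1)/y)=\overline{B'}C'(-1)\overline{B'}C'(1-y)B'\overline{C'}(y)$. The degenerate cases $x=0$ and $y=0$ are then handled directly: both sides of the claimed identity vanish because $\chi(0)=0$ for every multiplicative character $\chi$, making the factors $\varepsilon(xy)$ on the left and $\overline{A}(-x)$, $\overline{C'}(y)$, $\overline{B}(x)$ on the right annihilate every term.
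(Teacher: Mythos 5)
Your proposal is correct and follows the same overall decomposition as the paper: both apply Theorem~\ref{t1} with $C=B$, use \eqref{f4} to split ${B\chi\choose B\chi}$ and ${\overline{A}B\choose\varepsilon}$ into a $-1$ part and a $\delta$ part, read off the middle term from the collapsed $\chi=\overline{B}$ contribution via \eqref{e1-1}, and reduce everything to one key evaluation of the double sum $\sum_{\chi,\lambda}{A\chi\choose\chi}{A\chi\lambda\choose\lambda}{B'\lambda\choose C'\lambda}\chi(x)\lambda(y)$. Where you genuinely diverge is in how that key identity (the paper's \eqref{e}) is established. The paper performs the $\chi$-sum first, recognizes it as $(q-1)\,{}_{2}F_1\left(\begin{smallmatrix}A,\ A\lambda\\ A\end{smallmatrix}\big|\,x\right)$, and imports Greene's closed form \eqref{p2-33} together with \eqref{t32-1} to finish the $\lambda$-sum. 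You instead perform the $\lambda$-sum first via \eqref{e1-1}, unfold the resulting ${}_{2}F_1(A\chi,B';C';y)$ into its defining character sum over $v$, evaluate $\sum_\chi{A\chi\choose\chi}\chi(x)\overline{A\chi}(1-yv)$ pointwise by the binomial theorem (with the $v=1/y$ correction), and reassemble. The two routes are equivalent in substance --- \eqref{p2-33} is itself essentially the binomial theorem in disguise --- but the paper's version is shorter given the quoted lemmas, while yours is more self-contained and makes visible exactly why the $\delta$-correction at $v=1/y$ cancels the boundary term of Theorem~\ref{t1}; your bookkeeping of that cancellation, the sign identity $B'C'(-1)\overline{B'}C'(-1)=1$, and the degenerate cases $x=0$, $y=0$ all check out.
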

\noindent{\it Proof.} We know from \cite [(2.11)]{Gr} that for any $A,~B\in \widehat{\mathbb{F}_{q}}$ and $x\in \mathbb{F}_{q},$ we have
\begin{equation}\label{t32-1}
\sum_{\chi}{A\chi\choose B\chi}\chi(x)=(q-1)\overline{B}(x)\overline{A}B(1-x).
\end{equation}
It is easily seen from \eqref{p2-33} that
\begin{align*}
  \sum_{\chi}{A\chi\choose \chi}{A\lambda\chi\choose A\chi}\chi(x)&=(q-1){}_{2}F_1 \left(\begin{matrix}
A, A\lambda \\
A \end{matrix}
\bigg| x \right)\\
&=(q-1){A\lambda\choose A}\varepsilon(x)\overline{A}\overline{\lambda}(1-x)-(q-1)\overline{A}(-x)\label{p2-33}.
\end{align*}
Then, by \eqref{e1-1} and \eqref{t32-1},
\begin{equation}
\begin{aligned}
&\sum_{\chi, \lambda}{A\chi\choose \chi}{A\chi\lambda\choose A\chi}{B'\lambda\choose C'\lambda}\chi(x)\lambda(y)\label{e}\\
&=\sum_{\lambda}{B'\lambda\choose C'\lambda}\lambda(y)\sum_{\chi}{A\chi\choose \chi}{A\lambda\chi\choose A\chi}\chi(x)\\
&=(q-1)^2\varepsilon(x)\overline{A}(1-x){}_{2}F_1 \left(\begin{matrix}
A, B' \\
C' \end{matrix}
\bigg| \frac{y}{1-x} \right)-(q-1)^2\overline{A}(-x)\overline{C'}(y)\overline{B'}C'(1-y).
\end{aligned}
\end{equation}
Thus, using \eqref{f2} and \eqref{f4} and cancelling some terms to give
\begin{align*}
  &F_{2}(A;B,B';B,C';x,y)\\
&=\frac{1}{(q-1)^2}\sum_{\chi, \lambda}{A\chi\choose \chi}{A\chi\lambda\choose A\chi}\left(-1+(q-1)\delta(B\chi)\right){B'\lambda\choose C'\lambda}\chi(x)\lambda(y)\\
&~+\overline{A}(-x)\overline{C'}(y)\overline{B'}C'(1-y)\left(-1+(q-1)\delta(A\overline{B})\right)\\
&=-\varepsilon(x)\overline{A}(1-x){}_{2}F_1 \left(\begin{matrix}
A, B' \\
C' \end{matrix}
\bigg| \frac{y}{1-x} \right)+\overline{B}(x){A\overline{B}\choose \overline{B}}{}_{2}F_1 \left(\begin{matrix}
A\overline{B},B' \\
C' \end{matrix}
\bigg|y \right)\\
&~+(q-1)\delta(A\overline{B})\overline{A}(-x)\overline{C'}(y)\overline{B'}C'(1-y).
\end{align*}
This concludes the proof of Theorem \ref{t32}. \qed

From Theorem \ref{t32} and \eqref{p11-1} we can easily derive the following identity which is the finite field analogue for the formula
\begin{equation*}
F_{2}(a;b,b';c,b';x,y)=(1-y)^{-a}{}_{2}F_1 \left(\begin{matrix}
a, b \\
c \end{matrix}
\bigg| \frac{x}{1-y} \right).
\end{equation*}
\begin{thm}For any $A,B,B', C\in \widehat{\mathbb{F}_{q}}$ and $x\in \mathbb{F}_{q},~y \in \mathbb{F}_{q}\backslash \{1\},$ we have
\begin{align*}
  &F_{2}(A;B,B';C,B';x,y)\\
&=-\varepsilon(y)\overline{A}(1-y){}_{2}F_1 \left(\begin{matrix}
A, B \\
C \end{matrix}
\bigg| \frac{x}{1-y} \right)+\overline{A}(-y)\overline{C}(x)\overline{B}C(1-x)\\
&~+\overline{A}(-y)\overline{C}(x)\overline{B}C(1-x)\left(-1+(q-1)\delta(A\overline{B'})\right)\\
&~+\overline{B'}(y){A\overline{B'}\choose \overline{B'}}{}_{2}F_1 \left(\begin{matrix}
A\overline{B'},B\\
C \end{matrix}
\bigg|x \right).
\end{align*}
\end{thm}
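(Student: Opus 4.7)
The plan is to derive the identity directly from Theorem \ref{t32} combined with the symmetry relation \eqref{p11-1}, exactly mirroring how Bailey's second reduction formula follows from his first in the classical setting. The key observation is that \eqref{p11-1} gives
\begin{equation*}
F_{2}(A;B,B';C,B';x,y) = F_{2}(A;B',B;B',C;y,x),
\end{equation*}
and the right-hand side now has the form $F_{2}(\widetilde{A};\widetilde{B},\widetilde{B'};\widetilde{B},\widetilde{C'};\widetilde{x},\widetilde{y})$ with $\widetilde{B}=B'$, $\widetilde{B'}=B$, $\widetilde{C'}=C$, $\widetilde{x}=y$, $\widetilde{y}=x$. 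In particular, the hypothesis $\widetilde{x}\ne 1$ of Theorem \ref{t32} translates into the hypothesis $y\ne 1$ assumed here.

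First I would invoke Theorem \ref{t32} with this substitution, which produces
\begin{align*}
F_{2}(A;B',B;B',C;y,x) &= -\varepsilon(y)\overline{A}(1-y)\,{}_{2}F_1\left(\begin{matrix} A,B \\ C \end{matrix}\bigg|\frac{x}{1-y}\right) \\
&\quad + \overline{B'}(y){A\overline{B'}\choose \overline{B'}}\,{}_{2}F_1\left(\begin{matrix} A\overline{B'},B \\ C \end{matrix}\bigg|x\right) \\
&\quad + (q-1)\delta(A\overline{B'})\,\overline{A}(-y)\overline{C}(x)\overline{B}C(1-x).
\end{align*}

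Next I would reconcile this three-term expression with the four-term expression in the statement. The trick is to use the identity
\begin{equation*}
(q-1)\delta(A\overline{B'}) \;=\; 1 + \bigl(-1+(q-1)\delta(A\overline{B'})\bigr),
\end{equation*}
which splits the $\delta$-term into $\overline{A}(-y)\overline{C}(x)\overline{B}C(1-x)$ plus $\overline{A}(-y)\overline{C}(x)\overline{B}C(1-x)\bigl(-1+(q-1)\delta(A\overline{B'})\bigr)$, matching precisely the second and third summands of the claim. Combined with the other two terms, this reproduces the stated formula verbatim.

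There is essentially no obstacle: the entire argument is symmetry plus parameter relabeling, and the only care needed is in tracking which character is substituted for which in Theorem \ref{t32} (in particular that $\widetilde{C'}$, the free lower parameter, becomes $C$, while the coinciding pair becomes $B'$). The somewhat unusual split form in which the statement is written appears to be a deliberate stylistic choice so that the result has the same shape as Theorem \ref{t32}, but mathematically it is equivalent to the cleaner three-term form produced by the direct substitution.
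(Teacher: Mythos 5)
Your proposal is correct and follows exactly the route the paper intends: the paper states that this theorem "can easily be derived from Theorem \ref{t32} and \eqref{p11-1}" without writing out the details, and your substitution $B\leftrightarrow B'$, $C'\to C$, $x\leftrightarrow y$ together with the observation that the second and third summands of the statement combine to $(q-1)\delta(A\overline{B'})\,\overline{A}(-y)\overline{C}(x)\overline{B}C(1-x)$ supplies precisely those details.
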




From the definition of $F_{2}(A;B,B';C,C';x,y),$ we can easily deduce the following transformation formulae for $F_{2}(A;B,B';C,C';x,y).$
\begin{thm}\label{t41}For any $A,B,B',C,C'\in \widehat{\mathbb{F}_{q}} $ and $x,y\in \mathbb{F}_{q},$ we have
\begin{align}
  &F_{2}(A;B,B';C,C';x,y) \label{t411}\\
&=C(-1)\overline{A}(1-x)F_{2}\left(A;\overline{B}C,B';C,C';-\frac{x}{1-x},\frac{y}{1-x}\right)\notag\\
&=C'(-1)\overline{A}(1-y)F_{2}\left(A;B,\overline{B'}C';C,C';\frac{x}{1-y},-\frac{y}{1-y}\right)\notag\\
&=CC'(-1)\overline{A}(1-x-y)F_{2}\left(A;\overline{B}C,\overline{B'}C';C,C';-\frac{x}{1-x-y},-\frac{y}{1-x-y}\right),\notag
\end{align}
where the first identity holds for $x\neq 1,$ the second identity holds for $y\neq 1$ and the third identity holds for $x+y\neq 1.$
\end{thm}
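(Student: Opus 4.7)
The plan is to establish all three identities by a straightforward change of summation variables in the defining double sum, mirroring the classical Bailey-type transformations which arise from the substitutions $u\mapsto 1-u$ and $v\mapsto 1-v$ in the integral representation.

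For the first identity (under the hypothesis $x\neq 1$), I would replace the dummy variable $u$ by $1-u$ in the definition of $F_{2}$. Since $u$ ranges over all of $\mathbb{F}_{q}$, so does $1-u$, and the sum becomes
$$\varepsilon(xy)BB'CC'(-1)\sum_{u,v}B(1-u)\overline{B}C(u)B'(v)\overline{B'}C'(1-v)\overline{A}(1-x+ux-vy).$$
Since $x\neq 1$, I can factor $1-x+ux-vy=(1-x)\bigl[1-u\cdot(-x/(1-x))-v\cdot y/(1-x)\bigr]$, so by multiplicativity of $\overline{A}$ a factor of $\overline{A}(1-x)$ comes out and the remaining double sum is exactly the one appearing in the definition of $F_{2}(A;\overline{B}C,B';C,C';-x/(1-x),y/(1-x))$, up to its prefactor $\varepsilon(xy)\cdot(\overline{B}C)B'CC'(-1)$. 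Comparing the two prefactors and using $\overline{B}(-1)=B(-1)$ together with $C(-1)^2=\varepsilon(-1)=1$ shows the missing factor is precisely $C(-1)$, which gives the first identity.

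The second identity is the mirror image of the first: under the hypothesis $y\neq 1$, I apply the substitution $v\mapsto 1-v$, extract $\overline{A}(1-y)$ from the factorization $1-ux-(1-v)y=(1-y)\bigl[1-u\cdot x/(1-y)-v\cdot(-y/(1-y))\bigr]$, and the analogous prefactor comparison produces $C'(-1)$. For the third identity, under the hypothesis $x+y\neq 1$, I would perform both substitutions simultaneously; the argument of $\overline{A}$ becomes $1-x-y+ux+vy=(1-x-y)\bigl[1-u\cdot(-x/(1-x-y))-v\cdot(-y/(1-x-y))\bigr]$, and the combined prefactor picks up $CC'(-1)$.

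The only point that requires care is the bookkeeping of the $(-1)$-factors and the $\varepsilon$-factor. One must verify that after each substitution the character on $1-u$ in the transformed sum, namely $B$, coincides with $\overline{(\overline{B}C)}\cdot C$ (and analogously on the $v$-side), and that $\varepsilon\bigl((-x/(1-x))(y/(1-x))\bigr)=\varepsilon(xy)$ for $x\neq 1$; the latter holds because $\varepsilon$ is trivial on $\mathbb{F}_{q}^{*}$. The edge cases $x=0$ or $y=0$ are automatic, since both sides vanish thanks to $\varepsilon(xy)$. I do not anticipate any serious obstacle: the entire argument reduces to a careful change of variables followed by matching prefactors.
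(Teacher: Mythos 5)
Your proposal is correct and follows exactly the paper's own argument: the authors likewise prove Theorem \ref{t41} by making the substitutions $u=1-u'$, $v=1-v'$ (separately and then jointly) in the defining double sum and matching the resulting prefactors. The bookkeeping you describe (the character on $1-u$ becoming $\overline{(\overline{B}C)}\,C=B$, the identity $\varepsilon(XY)=\varepsilon(xy)$, and the ratio of sign factors reducing to $C(-1)$) all checks out.
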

\begin{proof}Using the definition of $F_{2}(A;B,B';C,C';x,y)$ and then  making the substitutions (1) $u=1-u',~v=v',$ (2) $u=u',~v=1-v',$ (3) $u=1-u',~v=1-v'$ at the left side of \eqref{t411} we can easily obtain these transformation formulae.
\end{proof}
It is easily seen that these transformation formulae in \eqref{t411} can be regarded as the finite field analogue of \cite [\S 9.4, (6)--(8)]{B}.

From Theorem \ref{t41} and\eqref{p11-2},  we can easily deduce the  following result  involving $F_{2}(A;B,B';
\\C,C'; x,y)$ for the case $x+y=1.$
\begin{thm}For any $A,B,B',C,C'\in \widehat{\mathbb{F}_{q}} $ and $x\in \mathbb{F}_{q}\backslash \{1\},$ we have
\begin{equation*}
  F_{2}(A;B,B';C,C';x,1-x)=CB'C'(-1)\overline{A}(1-x) {}_{3}F_2 \left(\begin{matrix}
A,\overline{B}C,A\overline{C'}\\
C,AB'\overline{C'} \end{matrix}
\bigg| -\frac{x}{1-x} \right).
\end{equation*}
\end{thm}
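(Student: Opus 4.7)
The plan is to apply the first transformation in Theorem~\ref{t41} specialized to $y = 1-x$, which collapses the second argument of $F_2$ on the right-hand side to $1$, and then invoke the first identity in \eqref{p11-2} to rewrite that boundary $F_2$ as a ${}_3F_2$.

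More concretely, I would first observe that with $y = 1-x$, the factor $\frac{y}{1-x} = \frac{1-x}{1-x}$ equals $1$ (the hypothesis $x \neq 1$ is precisely what allows the use of Theorem~\ref{t41} here). Substituting $y \mapsto 1-x$ in the first equality of \eqref{t411} therefore gives
\begin{equation*}
F_{2}(A;B,B';C,C';x,1-x) = C(-1)\,\overline{A}(1-x)\,F_{2}\!\left(A;\overline{B}C,B';C,C';-\tfrac{x}{1-x},1\right).
\end{equation*}

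Next I would apply \eqref{p11-2}, which evaluates $F_2$ at second argument $1$, with the parameter $B$ replaced by $\overline{B}C$ and the variable $x$ replaced by $-\frac{x}{1-x}$. This yields
\begin{equation*}
F_{2}\!\left(A;\overline{B}C,B';C,C';-\tfrac{x}{1-x},1\right) = B'C'(-1)\,{}_{3}F_2\!\left(\begin{matrix} A,\overline{B}C,A\overline{C'}\\ C,AB'\overline{C'} \end{matrix}\bigg| -\tfrac{x}{1-x}\right).
\end{equation*}
Multiplying the two sign factors together, $C(-1)\cdot B'C'(-1) = CB'C'(-1)$, one immediately recovers the claimed formula.

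There is essentially no obstacle here since both ingredients are already available; the only thing to double-check is that the hypothesis $x \neq 1$ suffices to legitimize the single use of \eqref{t411}, and that the parameter bookkeeping between $B$ and $\overline{B}C$ in \eqref{p11-2} is done correctly so that the numerator characters $A, \overline{B}C, A\overline{C'}$ and denominator characters $C, AB'\overline{C'}$ of the resulting ${}_{3}F_2$ agree with the statement of the theorem.
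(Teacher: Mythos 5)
Your proposal is correct and follows essentially the same route as the paper, which likewise deduces the result by specializing the first transformation of Theorem \ref{t41} at $y=1-x$ (so that $\tfrac{y}{1-x}=1$) and then applying the boundary evaluation \eqref{p11-2} with $B$ replaced by $\overline{B}C$. The sign bookkeeping $C(-1)\cdot B'C'(-1)=CB'C'(-1)$ and the use of $x\neq 1$ are handled exactly as intended.
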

\section{Generating functions}
In this section, we establish some generating functions for $F_{2}(A;B,B';C,C';x,y).$

We first state a result of Greene  in our notations.
\begin{prop}\label{pp4-1}\emph{(See \cite [(2.15)]{Gr})} For any $A,B,C\in \widehat{\mathbb{F}_{q}},$  we have
\begin{align*}
  {A\choose B}{C\choose A}={C\choose B}{C\overline{B} \choose A\overline{B}}-(q-1)B(-1)\delta(A)+(q-1)AB(-1)\delta(B\overline{C}).
\end{align*}
\end{prop}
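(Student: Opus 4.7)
The plan is to prove the identity by passing through the Gauss-sum factorization of Jacobi sums and then auditing the degenerate cases where that factorization breaks down. Throughout I use the dictionary ${A \choose B} = B(-1) J(A, \overline{B})$ combined with $J(\chi, \lambda) = g(\chi) g(\lambda)/g(\chi\lambda)$ (valid when $\chi\lambda \neq \varepsilon$) and $g(\chi) g(\overline{\chi}) = \chi(-1) q$ (valid when $\chi \neq \varepsilon$), together with the exceptional values $g(\varepsilon) = -1$, $J(\varepsilon, \chi) = -1$ for $\chi \neq \varepsilon$, $J(\varepsilon, \varepsilon) = q-2$, and $J(\chi, \overline{\chi}) = -\chi(-1)$ for $\chi \neq \varepsilon$.

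In the generic range, meaning $A \neq \varepsilon$, $A \neq B$, $A \neq C$, and $B \neq C$, every Jacobi sum in sight admits the clean Gauss-sum factorization. A direct calculation yields
\begin{align*}
{A\choose B}{C\choose A}
&=\frac{B(-1)\,q\,g(\overline{B})\,g(C)}{g(A\overline{B})\,g(C\overline{A})},\\
{C\choose B}{C\overline{B}\choose A\overline{B}}
&=\frac{A(-1)\,g(C)\,g(\overline{B})\,g(\overline{A}B)}{g(C\overline{A})}.
\end{align*}
Their ratio collapses to $B(-1)\,q/\bigl(A(-1)\,g(A\overline{B})\,g(\overline{A}B)\bigr) = B(-1)\,q/\bigl(A(-1)\,A\overline{B}(-1)\,q\bigr) = 1$, using the identity $A(-1)\cdot A\overline{B}(-1) = B(-1)$, which follows from $A(-1)^{2}=1$. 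Hence the identity holds in the generic range without any correction, consistent with both $\delta$-terms vanishing there.

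I would next handle the two degenerate ranges $A = \varepsilon$ and $B = C$ where the factorization fails. In the case $A = \varepsilon$ (with $B,C$ otherwise nontrivial and distinct), the left-hand side evaluates to $B(-1)$ directly via \eqref{f4} and $J(\varepsilon,\chi)=-1$, whereas the formal Gauss-sum expression for the right side evaluates to $B(-1)\,q$, and the discrepancy $-(q-1)B(-1)$ is patched exactly by the $-(q-1)B(-1)\delta(A)$ correction; symmetrically, the case $B = C$ produces a discrepancy of $+(q-1)AB(-1)$ matched by the $(q-1)AB(-1)\delta(B\overline{C})$ correction. The remaining possible exceptional overlaps ($A=B$ or $A=C$ with none of the main degeneracies firing, $A=\varepsilon$ simultaneously with $B=C$, and the fully trivial case $A=B=C=\varepsilon$) are then verified one by one: in each either no correction is needed or the two $\delta$-contributions conspire correctly. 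The main obstacle is simply this case-by-case bookkeeping, since one must track which of $A$, $A\overline{B}$, $C\overline{A}$, $C\overline{B}$, $\overline{A}B$ is trivial in each subcase; but once the generic identity is in hand, the individual subcases are elementary.
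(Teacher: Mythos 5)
The paper offers no proof of this proposition at all --- it is quoted directly from Greene \cite[(2.15)]{Gr} --- so your Gauss-sum argument is necessarily an independent route rather than a reconstruction of anything in the text. The parts you actually write out are correct: the two displayed factorizations in the generic range $A\neq\varepsilon$, $A\neq B$, $A\neq C$, $B\neq C$ check out, their ratio does collapse to $1$ via $g(A\overline{B})g(\overline{A}B)=A\overline{B}(-1)q$, and the discrepancies you compute in the two main degenerate cases ($-(q-1)B(-1)$ when $A=\varepsilon$, and $+(q-1)AB(-1)$ when $B=C$) are exactly the $\delta$-corrections in the statement. A nice feature of your setup is that using $J(\chi,\lambda)=g(\chi)g(\lambda)/g(\chi\lambda)$ whenever $\chi\lambda\neq\varepsilon$ --- which remains valid when one factor is trivial, since $g(\varepsilon)=-1$ reproduces $J(\varepsilon,\lambda)=-1$ --- spares you from having to assume $B\neq\varepsilon$ or $C\neq\varepsilon$ in the generic range. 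The one soft spot is the last step: you assert rather than carry out the residual case analysis, and your list of leftover configurations is not quite exhaustive (for instance $A=\varepsilon=B$ with $C\neq\varepsilon$, or $A=\varepsilon=C$ with $B\neq\varepsilon$, are not literally among the cases you name). These do all close --- e.g.\ for $A=B\neq\varepsilon$, $A\neq C$ both sides reduce to $-{C\choose A}$ by \eqref{f4}; for $A=B=C=\varepsilon$ both sides equal $(q-2)^2$ --- so this is unfinished bookkeeping rather than a flaw in the method, but those half-dozen verifications should be written down before the proof can be called complete.
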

The following theorem involves a generating function for $F_{2}(A;B,B';C,C';x,y).$
\begin{thm}\label{t4-2}For any $A,B,B',C,C'\in \widehat{\mathbb{F}_{q}} $ and $x\in \mathbb{F}^*_{q},~y\in \mathbb{F}_{q},~t\in \mathbb{F}^*_{q}\backslash\{1\},$ we have
\begin{align*}
  &\frac{1}{q-1}\sum_{\theta}{A\theta\choose\theta}F_{2}(A\theta;B,B';C,C';x,y)\theta(t)\\
&=\overline{A}(1-t)F_{2}\left(A;B,B';C,C';\frac{x}{1-t}, \frac{y}{1-t}\right)\\
&~-\overline{A}(-t)\overline{C'}(y)\overline{B'}C'(1-y){}_{2}F_1 \left(\begin{matrix}
A, B \\
C \end{matrix}
\bigg| -\frac{x}{t} \right)\\
&~-\overline{A}(-t)\left({B\choose C}{B'\choose C'}\varepsilon(y)-F_{2}(\varepsilon;B,B';C,C';x,y)\right)\\
&~+BC(-1)\overline{A}(-x)\overline{C'}(y)\overline{B'}C'(1-y){}_{2}F_1 \left(\begin{matrix}
A, A\overline{C} \\
A\overline{B} \end{matrix}
\bigg| -\frac{t}{x} \right).
\end{align*}
\end{thm}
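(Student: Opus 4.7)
The plan is to expand $F_{2}(A\theta;B,B';C,C';x,y)$ using Theorem~\ref{t1} and then evaluate each of the two resulting $\theta$-sums (main double character-sum and boundary term) separately, after interchanging the order of summation with the outer $\theta$-sum.

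\emph{Boundary term of Theorem~\ref{t1}.} Using \eqref{f1} I would rewrite ${\overline{A\theta}B\choose B\overline{C}}=BC(-1){A\theta\overline{C}\choose B\overline{C}}$ and then \eqref{f2} to recast this as $BC(-1){A\overline{C}\theta\choose A\overline{B}\theta}$. Combining with $\theta(t)\overline{A\theta}(-x)=\overline{A}(-x)\theta(-t/x)$, the boundary contribution becomes
\[
BC(-1)\overline{A}(-x)\overline{C'}(y)\overline{B'}C'(1-y)\cdot\frac{1}{q-1}\sum_\theta{A\theta\choose\theta}{A\overline{C}\theta\choose A\overline{B}\theta}\theta(-t/x),
\]
which by \eqref{e1-1} is precisely the last term on the RHS.

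\emph{Main double character-sum.} After interchanging summations, the inner $\theta$-sum contains the product ${A\theta\choose\theta}{A\theta\chi\choose\chi}{A\theta\chi\lambda\choose\lambda}\theta(t)$. I would use \eqref{f2} to rewrite ${A\theta\chi\choose\chi}={A\theta\chi\choose A\theta}$ and then apply Proposition~\ref{pp4-1} to the pair ${A\theta\choose\theta}{A\theta\chi\choose A\theta}$. This decomposes it into a principal piece plus two $\delta$-correction terms, one supported at $\theta=\overline{A}$ (i.e.\ $A\theta=\varepsilon$) and the other at $\theta=\overline{\chi}$ (i.e.\ $A\theta\chi=A$).

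The $\theta=\overline{A}$ correction leaves a double $(\chi,\lambda)$-sum weighted by ${\chi\lambda\choose\lambda}{B\chi\choose C\chi}{B'\lambda\choose C'\lambda}\chi(x)\lambda(y)$. Inserting ${\chi\choose\chi}=-1+(q-1)\delta(\chi)$ from \eqref{f4} and comparing with the character-sum expansion of $F_{2}(\varepsilon;B,B';C,C';x,y)$ given by Theorem~\ref{t1} (with \eqref{t32-1} handling the residual single-character sum in $\lambda$), this piece evaluates to
\[
-\overline{A}(-t)\bigl[{B\choose C}{B'\choose C'}\varepsilon(y)-F_{2}(\varepsilon;B,B';C,C';x,y)\bigr],
\]
which is the third term on the RHS. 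The $\theta=\overline{\chi}$ correction factorises the remaining $(\chi,\lambda)$-sum: the $\chi$-sum is evaluated by \eqref{t32-1} and the $\lambda$-sum by \eqref{e1-1}. Together with the principal Proposition~\ref{pp4-1} piece---itself transformed via the substitution $\sigma=A\theta\chi$, where a further application of \eqref{t32-1} collapses the new inner $\sigma$-sum---these contributions reassemble into the first term $\overline{A}(1-t)F_{2}(A;B,B';C,C';x/(1-t),y/(1-t))$ and the second term $-\overline{A}(-t)\overline{C'}(y)\overline{B'}C'(1-y){}_{2}F_{1}(A,B;C;-x/t)$ of the RHS, after recognising the emerging triple character-sum as the character-sum expansion (via Theorem~\ref{t1}) of the scaled $F_{2}$.

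The main obstacle is this last reassembly: the principal Proposition~\ref{pp4-1} contribution is a triple $(\chi,\lambda,\sigma)$-sum, and matching it against the character-sum expansion of the scaled $F_{2}$ requires careful bookkeeping of the boundary contributions from the two applications of Theorem~\ref{t1} (one on the LHS expansion and one on the RHS's scaled $F_{2}$), as well as the isolation of the piece that combines with the $\theta=\overline{\chi}$ correction to form the ${}_{2}F_{1}(A,B;C;-x/t)$ factor of the second term.
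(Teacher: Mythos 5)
Your skeleton is essentially the paper's: expand $F_{2}(A\theta;B,B';C,C';x,y)$ by Theorem \ref{t1}, interchange sums, read the last right-hand-side term off the boundary contribution via \eqref{f1}, \eqref{f2} and \eqref{e1-1} (this part of your argument is correct and matches the paper exactly), and then decompose the product ${A\theta\choose\theta}{A\chi\theta\choose A\theta}$ inside the main triple sum. Where the paper recognizes the inner $\theta$-sum as $(q-1)\,{}_{3}F_{2}(A,A\chi,A\chi\lambda;A,A\chi;t)$ and applies \eqref{p2-2} together with \eqref{p2-33}, you apply Proposition \ref{pp4-1} directly; these are equivalent, and your three pieces correspond term by term to the three terms of \eqref{p2-2}. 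Your identification of the third right-hand-side term with the $\delta(A\theta)$ (i.e.\ $\theta=\overline{A}$) correction, evaluated through ${\chi\choose\chi}=-1+(q-1)\delta(\chi)$ and Theorem \ref{t1} as in the paper's \eqref{4-5}, is also correct.

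There are, however, concrete errors in the remaining steps. First, the second $\delta$-correction of Proposition \ref{pp4-1}, namely $(q-1)AB(-1)\delta(B\overline{C})$ with $B=\theta$ and $C=A\theta\chi$, is $\delta(\overline{A\chi})$: it is supported on $\chi=\overline{A}$, independently of $\theta$, not on $\theta=\overline{\chi}$. For that piece the $\chi$-sum is therefore collapsed by the delta, the surviving $\lambda$-sum is a single-binomial sum handled by \eqref{t32-1}, and \eqref{e1-1} plays no role; moreover this piece does not appear in the final answer at all --- it cancels exactly against the boundary term that Theorem \ref{t1} produces when the principal piece is converted into $\overline{A}(1-t)F_{2}(A;B,B';C,C';x/(1-t),y/(1-t))$ (the paper's \eqref{4-2}). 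Second, for the principal piece the residual sum $\sum_{\theta}{A\chi\theta\choose\theta}{A\chi\lambda\theta\choose A\chi\theta}\theta(t)$ contains two $\theta$-dependent binomial coefficients, so \eqref{t32-1} cannot collapse it after your substitution $\sigma=A\theta\chi$; it is $(q-1)\,{}_{2}F_{1}(A\chi,A\chi\lambda;A\chi;t)$ and must be evaluated by \eqref{p2-33}, and it is precisely this evaluation that splits off both the scaled $F_{2}$ and the piece yielding $-\overline{A}(-t)\overline{C'}(y)\overline{B'}C'(1-y)\,{}_{2}F_{1}(A,B;C;-x/t)$ via \eqref{t32-1} and \eqref{e1-1}. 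Finally, the ``reassembly'' you flag as the main obstacle is exactly the substance of the paper's proof (the cancellation just described plus the bookkeeping of the Theorem \ref{t1} boundary terms), and your proposal asserts rather than verifies it; as written the argument does not close.
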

\noindent{\it Proof.} It is easily seen from \eqref{p2-33} that
\begin{align*}
  {}_{2}F_1 \left(\begin{matrix}
A\chi, A\chi\lambda \\
 A\chi \end{matrix}
\bigg|t\right)={A\chi\lambda\choose A\chi}\overline{A}\overline{\chi}\overline{\lambda}(1-t)-\overline{A}\overline{\chi}(-t).
\end{align*}
Then from \eqref{p2-2} we know that
\begin{align*}
  &{}_{3}F_2 \left(\begin{matrix}
A, A\chi, A\chi\lambda \\
A, A\chi \end{matrix}
\bigg|t\right)\\
&={A\chi\choose A}{}_{2}F_1 \left(\begin{matrix}
A\chi, A\chi\lambda \\
 A\chi \end{matrix}
\bigg|t\right)-\overline{A}(-t){\chi\lambda\choose \chi}+(q-1)A(-1)\overline{A}\overline{\chi}(t)\overline{\lambda}(1-t)\delta(A\chi)\\
&={A\chi\choose \chi}{A\chi\lambda\choose A\chi}\overline{A}\overline{\chi}\overline{\lambda}(1-t)-{A\chi\choose \chi}\overline{A}\overline{\chi}(-t)-\overline{A}(-t){\chi\lambda\choose \chi}\\
&~+(q-1)A(-1)\overline{A}\overline{\chi}(t)\overline{\lambda}(1-t)\delta(A\chi).
\end{align*}
Thus
\begin{align}
  &\sum_{\theta,\chi,\lambda}{A\theta\choose \theta}{A\chi\theta\choose A\theta}{A\chi\lambda\theta\choose A\chi\theta}{B\chi \choose C\chi}{B'\lambda \choose C'\lambda}\chi(x)\lambda(y)\theta(t)\label{4-1}\\
&=(q-1)\sum_{\chi,\lambda}{B\chi \choose C\chi}{B'\lambda \choose C'\lambda}\chi(x)\lambda(y){}_{3}F_2 \left(\begin{matrix}
A, A\chi, A\chi\lambda \\
A, A\chi \end{matrix}
\bigg|t\right)\notag\\
&=(q-1)\overline{A}(1-t)\sum_{\chi,\lambda}{A\chi\choose \chi}{A\chi\lambda\choose A\chi}{B\chi \choose C\chi}{B'\lambda \choose C'\lambda}\chi\left(\frac{x}{1-t}\right)\lambda\left(\frac{y}{1-t}\right)\notag\\
&~-(q-1)\overline{A}(-t)\sum_{\chi,\lambda}{A\chi\choose \chi}{B\chi \choose C\chi}{B'\lambda \choose C'\lambda}\chi\left(-\frac{x}{t}\right)\lambda(y)\notag\\
&~-(q-1)\overline{A}(-t)\sum_{\chi,\lambda}{\chi\lambda\choose \chi}{B\chi \choose C\chi}{B'\lambda \choose C'\lambda}\chi(x)\lambda(y)\notag
\end{align}
\begin{align*}
&~+(q-1)^2\overline{A}(-x){\overline{A}B\choose \overline{A}C}\sum_{\lambda}{B'\lambda\choose C'\lambda}\lambda\left(\frac{y}{1-t}\right).
\end{align*}
From Theorem \ref{t1} we see that
\begin{align}
  &\sum_{\chi,\lambda}{A\chi\choose \chi}{A\chi\lambda\choose A\chi}{B\chi \choose C\chi}{B'\lambda \choose C'\lambda}\chi\left(\frac{x}{1-t}\right)\lambda\left(\frac{y}{1-t}\right)\label{4-2}\\
&=(q-1)^2F_{2}\left(A;B,B';C,C';\frac{x}{1-t}, \frac{y}{1-t}\right)\notag\\
&~-(q-1)^2\overline{A}(-x)\overline{C'}(y)\overline{B'}C'(1-t-y)AB'(1-t){\overline{A}B\choose B\overline{C}}.\notag
\end{align}
By \eqref{f1} and \eqref{t32-1},
\begin{align}
 &\sum_{\chi,\lambda}{A\chi\choose \chi}{B\chi \choose C\chi}{B'\lambda \choose C'\lambda}\chi\left(-\frac{x}{t}\right)\lambda(y)\label{4-3}\\
&=\sum_{\chi}{A\chi\choose \chi}{B\chi \choose C\chi}\chi\left(-\frac{x}{t}\right)\sum_{\lambda}{B'\lambda \choose C'\lambda}\lambda(y)\notag\\
&=(q-1)^2\overline{C'}(y)\overline{B'}C'(1-y){}_{2}F_1 \left(\begin{matrix}
A, B \\
C \end{matrix}
\bigg| -\frac{x}{t} \right). \notag
\end{align}
It can be deduced from \eqref{f4} and \eqref{t32-1} that
\begin{align*}
  \sum_{\lambda}{\lambda\choose \lambda}{B'\lambda\choose C'\lambda}\lambda(y)&=-\sum_{\lambda}{B'\lambda\choose C'\lambda}\lambda(y)+(q-1){B'\choose C'}\varepsilon(y)\\
&=-(q-1)\overline{C'}(y)\overline{B'}C'(1-y)+(q-1){B'\choose C'}\varepsilon(y).
\end{align*}
This combines \eqref{f4} to give
\begin{align*}
  &\sum_{\chi,\lambda}{\chi\choose \chi}{\chi\lambda\choose \chi}{B\chi \choose C\chi}{B'\lambda \choose C'\lambda}\chi(x)\lambda(y)\\
&=(q-1){B\choose C}\sum_{\lambda}{\lambda\choose \lambda}{B'\lambda\choose C'\lambda}\lambda(y)-\sum_{\chi,\lambda}{\chi\lambda\choose \chi}{B\chi \choose C\chi}{B'\lambda \choose C'\lambda}\chi(x)\lambda(y)\\
&=(q-1)^2 {B\choose C}{B'\choose C'}\varepsilon(y)-(q-1)^2 \overline{C'}(y)\overline{B'}C'(1-y){B\choose C}\\
&~-\sum_{\chi,\lambda}{\chi\lambda\choose \chi}{B\chi \choose C\chi}{B'\lambda \choose C'\lambda}\chi(x)\lambda(y).
\end{align*}
From Theorem \ref{t1} and \eqref{f2} we have
\begin{align*}
 \sum_{\chi,\lambda}{\chi\choose \chi}{\chi\lambda\choose \chi}{B\chi \choose C\chi}{B'\lambda \choose C'\lambda}\chi(x)\lambda(y)&=(q-1)^2F_{2}(\varepsilon;B,B';C,C';x,y)\\
&~-(q-1)^2\overline{C'}(y)\overline{B'}C'(1-y){B\choose C}.
\end{align*}
So we deduce from the above two identities that
\begin{align}
  \sum_{\chi,\lambda}{\chi\lambda\choose \chi}{B\chi \choose C\chi}{B'\lambda \choose C'\lambda}\chi(x)\lambda(y)&=(q-1)^2 {B\choose C}{B'\choose C'}\varepsilon(y)\label{4-5}\\
&~-(q-1)^2F_{2}(\varepsilon;B,B';C,C';x,y).\notag
\end{align}
By Theorem \ref{t1} and \eqref{f2}--\eqref{f3},
\begin{align*}
  &\sum_{\theta}{A\theta\choose\theta}F_{2}(A\theta;B,B';C,C';x,y)\theta(t)\\
&=\frac{1}{(q-1)^2}\sum_{\theta,\chi,\lambda}{A\theta\choose \theta}{A\chi\theta\choose A\theta}{A\chi\lambda\theta\choose A\chi\theta}{B\chi \choose C\chi}{B'\lambda \choose C'\lambda}\chi(x)\lambda(y)\theta(t)\\
&~+BC(-1)\overline{A}(-x)\overline{C'}(y)\overline{B'}C'(1-y)\sum_{\theta}{A\theta\choose \theta}{A\overline{C}\theta\choose A\overline{B}\theta}\theta(-t/x).
\end{align*}
Using \eqref{4-2}--\eqref{4-5} and \eqref{t32-1} in \eqref{4-1}, substituting \eqref{4-1} in the above identity and cancelling two terms, we obtain  the result
to ends the proof of Theorem \ref{t4-2}.\qed

Theorem \ref{t4-2} is actually the finite field analogue for \cite [(2.2)]{CA}.

We also establish two other generating functions for $F_{2}(A;B,B';C,C';x,y).$
\begin{thm}\label{t4-1}Let $A,B,B',C,C'\in \widehat{\mathbb{F}_{q}} $ and $x,y\in \mathbb{F}_{q},~t\in \mathbb{F}_{q}\backslash \{1\}.$  If $x\neq 1,$ then
\begin{align}
 &\frac{1}{q-1}\sum_{\theta}{B\overline{C}\theta\choose\theta}F_{2}(A;B\theta,B';C,C';x,y)\theta(t)\label{t41-1}\\
&=\varepsilon(t)\overline{B}(1-t)F_{2}\left(A;B,B';C,C';\frac{x}{1-t},y\right) \notag\\
&~-\overline{B}C(-t)\varepsilon(x)\overline{A}(1-x){}_{2}F_1 \left(\begin{matrix}
A, B' \\
C' \end{matrix}
\bigg| \frac{y}{1-x} \right);\notag
\end{align}
if $y\neq 1,$ then
\begin{align}
&\frac{1}{q-1}\sum_{\theta}{B'\overline{C'}\theta\choose\theta}F_{2}(A;B,B'\theta;C,C';x,y)\theta(t)\label{t41-2}\\
&=\varepsilon(t)\overline{B'}(1-t)F_{2}\left(A;B,B';C,C';x,\frac{y}{1-t}\right) \notag\\
&~-\overline{B'}C'(-t)\varepsilon(y)\overline{A}(1-y){}_{2}F_1 \left(\begin{matrix}
A, B \\
C \end{matrix}
\bigg| \frac{x}{1-y} \right).\notag
\end{align}
\end{thm}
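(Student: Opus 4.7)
The plan is to prove \eqref{t41-1} directly by the same character-sum machinery used in Theorem~\ref{t4-2}, and then deduce \eqref{t41-2} via the symmetry \eqref{p11-1}. First I apply Theorem~\ref{t1} with $B$ replaced by $B\theta$ to split $F_{2}(A;B\theta,B';C,C';x,y)$ into a Theorem~\ref{t1} double character-sum plus its boundary term; multiplying by ${B\overline{C}\theta\choose\theta}\theta(t)/(q-1)$ and summing over $\theta$ decomposes the left-hand side into a triple character-sum contribution and a single boundary contribution.

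The crucial move is the inner $\theta$-sum in the triple-sum piece,
\[\frac{1}{q-1}\sum_{\theta}{B\overline{C}\theta\choose\theta}{B\theta\chi\choose C\chi}\theta(t).\]
Using \eqref{f2} to rewrite ${B\theta\chi\choose C\chi}={B\theta\chi\choose B\theta\overline{C}}$, this becomes ${}_{2}F_{1}(B\overline{C},B\chi;B\overline{C};t)$ by \eqref{e1-1}, which \eqref{p2-33} evaluates as ${B\chi\choose B\overline{C}}\varepsilon(t)\overline{B\chi}(1-t)-\overline{B}C(-t)$ (the $\delta(1-t)$ correction vanishes since $t\neq 1$). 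Substituting back and invoking \eqref{f2} once more to get ${B\chi\choose B\overline{C}}={B\chi\choose C\chi}$, the $\varepsilon(t)\overline{B}(1-t)$-piece recombines into the Theorem~\ref{t1} double sum at $(x/(1-t),y)$, equal by Theorem~\ref{t1} to $\varepsilon(t)\overline{B}(1-t)F_{2}(A;B,B';C,C';x/(1-t),y)$ modulo a boundary term. The $-\overline{B}C(-t)$-piece requires evaluating $\sum_{\chi,\lambda}{A\chi\choose\chi}{A\chi\lambda\choose\lambda}{B'\lambda\choose C'\lambda}\chi(x)\lambda(y)$: the $\chi$-sum, after rewriting ${A\chi\lambda\choose\lambda}={A\chi\lambda\choose A\chi}$ via \eqref{f2}, is $(q-1){}_{2}F_{1}(A,A\lambda;A;x)$, which \eqref{p2-33} simplifies using $x\neq 1$; the $\lambda$-sum then closes via \eqref{e1-1} and \eqref{t32-1}, producing $-\overline{B}C(-t)\varepsilon(x)\overline{A}(1-x){}_{2}F_{1}(A,B';C';y/(1-x))$ plus one further boundary piece.

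The Theorem~\ref{t1} boundary contribution is handled in parallel: $\frac{1}{q-1}\sum_{\theta}{B\overline{C}\theta\choose\theta}{\overline{A}B\theta\choose B\theta\overline{C}}\theta(t) = {}_{2}F_{1}(B\overline{C},\overline{A}B;B\overline{C};t)$, evaluated by \eqref{p2-33}. Summing the three contributions, the three residual boundary-style terms — all carrying the common factor $\overline{A}(-x)\overline{C'}(y)\overline{B'}C'(1-y)$ — cancel pairwise (using $\overline{A}(-x/(1-t))=\overline{A}(-x)A(1-t)$ and $B(-1)^{2}=1$), leaving exactly the right-hand side of \eqref{t41-1}. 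Finally, \eqref{t41-2} follows from \eqref{t41-1} by swapping $(B,C,x)\leftrightarrow(B',C',y)$ and applying \eqref{p11-1} to both $F_{2}$ occurrences to restore the original variable order. The main obstacle is tracking the three correction terms and verifying their signed cancellation; the argument parallels Theorem~\ref{t4-2}, but the essential structural move is the \eqref{f2}-based rewriting of ${B\theta\chi\choose C\chi}$ that first exhibits the $\theta$-sum as a ${}_{2}F_{1}$.
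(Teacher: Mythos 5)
Your proposal is correct and takes essentially the same route as the paper: expand via Theorem \ref{t1}, evaluate the inner $\theta$-sum, recombine into $F_{2}$ at argument $x/(1-t)$, dispose of the $-\overline{B}C(-t)$ piece by re-deriving the identity \eqref{e} from the proof of Theorem \ref{t32}, evaluate the boundary $\theta$-sum by \eqref{p2-33}, and cancel the residual terms carrying the factor $\overline{A}(-x)\overline{C'}(y)\overline{B'}C'(1-y)$ (there are four of these, cancelling in two pairs, rather than three). The only divergence is the inner $\theta$-sum, which the paper splits with Proposition \ref{pp4-1} and then sums via \eqref{t32-1}, whereas you recognize it directly as a ${}_{2}F_1$ with parameters $B\overline{C},\,B\chi;\,B\overline{C}$ at $t$ and apply \eqref{p2-33}; the two evaluations agree, and your variant mildly streamlines the argument by avoiding Proposition \ref{pp4-1} altogether (the paper itself uses exactly your device for the boundary sum \eqref{q41-2}).
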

\noindent{\it Proof.} We first prove \eqref{t41-1}.
It is easy to know from Theorem \ref{t1} that
\begin{align}
 &\sum_{\theta}{B\overline{C}\theta\choose\theta}F_{2}(A;B\theta,B';C,C';x,y)\theta(t)\label{5-2}\\
&=\frac{1}{(q-1)^2}\sum_{\theta, \chi, \lambda}{B\overline{C}\theta\choose\theta}{B\theta\chi\choose B\overline{C}\theta}{A\chi\choose \chi}{A\chi\lambda\choose \lambda}{B'\lambda\choose C'\lambda}\chi(x)\lambda(y)\theta(t)\notag\\
&~+\overline{A}(-x)\overline{C'}(y)\overline{B'}C'(1-y)\sum_{\theta}{B\overline{C}\theta\choose\theta}{\overline{A}B\theta\choose B\overline{C}\theta}\theta(t).\notag
\end{align}
It can be seen from  Proposition \ref{pp4-1} that
\begin{equation*}
{B\overline{C}\theta\choose\theta}{B\theta\chi\choose B\overline{C}\theta}={B\theta\chi\choose \theta}{B\chi\choose B\overline{C}}-(q-1)\theta(-1)\delta(B\overline{C}\theta)+(q-1)BC(-1)\delta(\overline{B}\overline{\chi}).
\end{equation*}
Then
\begin{align}
  &\sum_{\theta, \chi, \lambda}{B\overline{C}\theta\choose\theta}{B\theta\chi\choose B\overline{C}\theta}{A\chi\choose \chi}{A\chi\lambda\choose \lambda}{B'\lambda\choose C'\lambda}\chi(x)\lambda(y)\theta(t)\label{5-1}\\
&=\sum_{\theta, \chi, \lambda}{B\theta\chi\choose \theta}{B\chi\choose B\overline{C}}{A\chi\choose \chi}{A\chi\lambda\choose \lambda}{B'\lambda\choose C'\lambda}\chi(x)\lambda(y)\theta(t)\notag\\
&~-(q-1)\overline{B}C(-t)\sum_{\chi, \lambda}{A\chi\choose \chi}{A\chi\lambda\choose \lambda}{B'\lambda\choose C'\lambda}\chi(x)\lambda(y).\notag
\end{align}
It follows from \eqref{f2}, \eqref{t32-1} and Theorem \ref{t1} that
\begin{align}
  &\sum_{\theta, \chi, \lambda}{B\theta\chi\choose \theta}{B\chi\choose B\overline{C}}{A\chi\choose \chi}{A\chi\lambda\choose \lambda}{B'\lambda\choose C'\lambda}\chi(x)\lambda(y)\theta(t)\label{5-3}\\
&=\sum_{ \chi, \lambda}{A\chi\choose \chi}{A\chi\lambda\choose \lambda}{B\chi\choose C\chi}{B'\lambda\choose C'\lambda}\chi(x)\lambda(y)\sum_{\theta}{B\chi\theta\choose \theta}\theta(t)\notag\\
&=(q-1)\varepsilon(t)\overline{B}(1-t)\sum_{ \chi, \lambda}{A\chi\choose \chi}{A\chi\lambda\choose \lambda}{B\chi\choose C\chi}{B'\lambda\choose C'\lambda}\chi\left(\frac{x}{1-t}\right)\lambda(y)\notag\\
&=(q-1)^3\varepsilon(t)\overline{B}(1-t)F_{2}\left(A;B,B';C,C';\frac{x}{1-t},y\right)\notag\\
&~-(q-1)^3\varepsilon(t)A\overline{B}(1-t)\overline{A}(-x)\overline{C'}(y)\overline{B'}C'(1-y){\overline{A}B\choose B\overline{C}}.\notag
\end{align}
It is easily known from \eqref{p2-33} that
\begin{align}
  \sum_{\theta}{B\overline{C}\theta\choose\theta}{\overline{A}B\theta\choose B\overline{C}\theta}\theta(t)&=(q-1){}_{2}F_1 \left(\begin{matrix}
B\overline{C}, \overline{A}B \\
B\overline{C} \end{matrix}
\bigg| t \right)\label{q41-2}\\
&=(q-1){\overline{A}B\choose B\overline{C}}\varepsilon(t)A\overline{B}(1-t)-(q-1)\overline{B}C(-t).\notag
\end{align}
Using \eqref{5-3}  and \eqref{e} in \eqref{5-1}, combining \eqref{5-2}, \eqref{5-1} and \eqref{q41-2} and canceling some terms, we get
\begin{align*}
  &\sum_{\theta}{B\overline{C}\theta\choose\theta}F_{2}(A;B\theta,B';C,C';x,y)\theta(t)\\
&=(q-1)\varepsilon(t)\overline{B}(1-t)F_{2}\left(A;B,B';C,C';\frac{x}{1-t},y\right) \notag\\
&~-(q-1)\overline{B}C(-t)\varepsilon(x)\overline{A}(1-x){}_{2}F_1 \left(\begin{matrix}
A, B' \\
C' \end{matrix}
\bigg| \frac{y}{1-x} \right),\notag
\end{align*}
which proves \eqref{t41-1}.

Identity \eqref{t41-2} follows easily from \eqref{t41-1} and \eqref{p11-1}. This finishes the proof of Theorem \ref{t4-1}. \qed
\section*{Acknowledgements}
The first author was partially  supported by the Initial Foundation for Scientific Research  of Northwest A\&F University (No. 2452015321).  The second author was partially  supported by the Natural Science Foundation of China (grant No.11571114). The third author was partially  supported by Natural Science Foundation of China (grant No. 11371294).

\end{document}